\documentclass[11pt,twoside,reqno,psamsfonts]{amsart}

\usepackage[left=2.9cm,top=3cm,right=2.9cm]{geometry}              
\geometry{a4paper,centering}
\usepackage[colorlinks = true, citecolor = black, linkcolor = black, urlcolor = black, pdfstartview=FitH]{hyperref}  
\usepackage[pdftex]{graphicx}
\usepackage[utf8]{inputenc}
\usepackage{microtype, float}
\usepackage{amsmath, verbatim}
\usepackage{amssymb,mathtools}
\usepackage{epstopdf}
\usepackage{epsfig}
\usepackage{mathscinet}

\usepackage{marginfix}
\usepackage{marginnote}
\setlength\marginparwidth{1.8cm}

\setcounter{tocdepth}{3}
\let\oldtocsection=\tocsection
\let\oldtocsubsection=\tocsubsection
\let\oldtocsubsubsection=\tocsubsubsection
\renewcommand{\tocsection}[2]{\hspace{0em}\textbf{\oldtocsection{#1}{#2}}}
\renewcommand{\tocsubsection}[2]{\hspace{1.8em}\oldtocsubsection{#1}{#2}}
\renewcommand{\tocsubsubsection}[2]{\hspace{3em}\oldtocsubsubsection{#1}{#2}}

\DeclareUnicodeCharacter{00A0}{ } 

\DeclareGraphicsRule{.tif}{png}{.png}{`convert #1 `dirname #1`/`basename #1 .tif`.png}

\numberwithin{equation}{section}

\theoremstyle{plain}

\newtheorem{theorem}{Theorem}[section]
\newtheorem{lemma}[theorem]{Lemma}
\newtheorem {corollary}[theorem]{Corollary}
\newtheorem{proposition}[theorem]{Proposition}

\newtheorem{claim}[theorem]{Claim}

\theoremstyle{definition}

\newtheorem {definition}[theorem]{Definition}

\newtheorem {remark}[theorem]{Remark}

\theoremstyle{remark}



\newcommand{\R}{\mathbb{R}}

\newcommand{\C}{\mathbb{C}}
\newcommand{\Q}{\mathbb{Q}}

\newcommand{\I}{\mathbb{I}}

\renewcommand{\emptyset}{\varnothing}
\renewcommand{\epsilon}{\varepsilon}
\renewcommand{\rho}{\varrho}
\renewcommand{\phi}{\varphi}

\renewcommand{\hat}{\widehat}


\renewcommand{\Im}{\mathrm{Im\,}}
\renewcommand{\Re}{\mathrm{Re\,}}

\newcommand{\trdeg}{\text{tr.deg}}

\newcommand{\F}{\mathcal{F}}

\newcommand{\Rbar}{\overline{\mathbb{R}}}

\newcommand{\V}{\mathcal{V}}
\newcommand{\Rexp}{\mathbb{R}_{\exp}}

\newcommand{\Hbb}{\mathbb{H}}
\newcommand{\Zbb}{\mathbb{Z}}

\newcommand{\Vcal}{\mathcal{V}}

\newcommand{\Omegabar}{\overline{\Omega}}

\newcommand{\wptilde}{\tilde{\wp}}
\newcommand{\xtilde}{\tilde{x}}
\newcommand{\ytilde}{\tilde{y}}

\newcommand{\Wcal}{\mathcal{W}}

\newcommand{\Fhat}{\hat{F}}
\newcommand{\ybar}{\bar{y}}

\newcommand{\zbar}{\bar{z}}

\setcounter{MaxMatrixCols}{20}

   \def\XXint#1#2#3{{\setbox0=\hbox{$#1{#2#3}{\int}$}
        \vcenter{\hbox{$#2#3$}}\kern-.5\wd0}}

\title[Nondefinability results for elliptic and modular functions]{Nondefinability results for elliptic and modular functions}

\author{McCulloch, Raymond}
\address{}
\email{raymond.mcculloch@manchester.ac.uk}

\keywords{Model theory, Weierstrass $\wp$-function, modular $j$-function, nondefinability, Ax-Schanuel theorem}
 \subjclass[2020]{33E05, 03C64, 11F03} 

 \thanks{\noindent Affiliation: University of Manchester. ORCID ID: 0000-0002-0570-4977\\This work formed part of the author's PhD thesis, which was supported by an Engineering and Physical Sciences Research Council Doctoral Training Award.\\The author is also is grateful to the Heilbronn Institute for Mathematical Research for support.}

\begin{document}
	
	\begin{abstract}
Let $\Omega$ be a complex lattice which does not have complex multiplication and $\wp=\wp_\Omega$ the Weierstrass $\wp$-function associated to it. Let $D\subseteq\mathbb{C}$ be a disc and $I\subseteq\mathbb{R}$ be a bounded closed interval such that $I\cap\Omega=\emptyset$. Let $f:D\rightarrow\mathbb{C}$ be a function definable in $(\overline{\mathbb{R}},\wp|_I)$. We show that if $f$ is holomorphic on $D$ then $f$ is definable in $\overline{\mathbb{R}}$. The proof of this result is an adaptation of the proof of Bianconi for the $\mathbb{R}_{\exp}$ case. We also give a characterization of lattices with complex multiplication in terms of definability and a nondefinability result for the modular $j$-function using similar methods.
	\end{abstract}

\maketitle
\section{Introduction}

	Model theorists have for some time been interested in definability questions concerning structures given by expanding the ordered real field $\Rbar$ by certain functions. For example the sine function is not definable in $\Rexp$, an immediate consequence of the o-minimality of $\Rexp$, which is proved by combining a result of Wilkie in \cite{wilkieexp} and work of Khovanski in \cite{khovanski}. Here and throughout this paper definable means definable with parameters in $\R $. In \cite{bianconi} Bianconi went further and showed that no non-trivial restriction of sine to a real interval is definable in $\Rexp$. This result may be rephrased to say that no restriction of the exponential function to an open disc $D$ in $\C$ is definable in $\Rexp$. Extending this further Bianconi showed in \cite{bianconiundef} that if $f:D\rightarrow\C$ is holomorphic and definable in $\Rexp$ then $f$ is algebraic. In \cite{newtonpetstar} Peterzil and Starchenko use this result to characterise all definable locally analytic subsets of $\C^n$ in $\Rexp$.

\par

This question of definability can in fact be generalised to other transcendental functions. Indeed such an example occurs with a transcendental function similar to the exponential function. Consider a \textit{complex lattice} $\Omega\subseteq\C$, a discrete subgroup of rank 2. Associated to each such lattice is the function

 $$\wp(z)=\wp_\Omega(z)=\frac{1}{z^2}+\sum_{\substack{\omega\in\Omega\setminus\{0\}}}\left(\frac{1}{(z-\omega)^2}-\frac{1}{\omega^2}\right).$$
 This function is similar to the exponential function as they are both periodic and have an addition formula as well as a differential equation. Also over the complex field an \textit{elliptic curve} $E(\C)=E_\Omega(\C)\subseteq\mathbb{P}(\C)$ is given by the equation $Y^2Z=4X^3-g_2XZ^2-g_3Z^3$, where the complex numbers $g_2$ and $g_3$ depend on the lattice $\Omega$ and are known as the \textit{invariants} of $\wp_\Omega$. The map $\exp_E:\C\rightarrow E(\C),z\mapsto[\wp(z):\wp'(z):1]$ is called the \textit{exponential map} of $E$. These similarities and the well known model theory of the exponential function make the model theory of the Weierstrass $\wp$-function a natural thing to consider. This has been done by various authors including Bianconi in \cite{bianconimc}, Macintyre in \cite{macwp} as well as Peterzil and Starchenko in \cite{ps-uniformdefforP} and Jones, Kirby and Servi in \cite{jks}.
\par 

During his investigations into the model theory of these Weierstrass $\wp$-functions, Macintyre observed the following. If the lattice $\Omega=\mathbb{Z}+i\mathbb{Z}$ then the restriction of $\wp$ to any complex disc $D$ on which $\wp$ is analytic is definable in the structure $(\Rbar,\wp|_{[1/8,3/8]})$. The interval $[1/8,3/8]$ is chosen for convenience as it avoids both the poles of $\wp$ and the zeroes of $\wp'$. Any such interval may be chosen.
\par 
For the lattice $\Zbb+i\Zbb$ it can immediately be seen that $\wp(iz)=-\wp(z)$ and this is all that is required to prove Macintyre's observation. In particular there is a non integer complex number $\alpha$ such that $\alpha\Omega\subseteq\Omega$. A lattice with this property is said to have \textit{complex multiplication}. A complex lattice $\Omega$ is called a \textit{real lattice} if $\overline{\Omega}=\Omega$. The lattice $\Omega=\mathbb{Z}+i\mathbb{Z}$ is an example of a real lattice which has complex multiplication. In the preprint \cite{mcculloch2020nondefinability} Macintyre's result is extended to all real lattices with complex multiplication. It is also shown that if the restriction of $\wp$ to some open disc $D\subseteq\C$ is definable in the structure $(\Rbar,\wp|_I)$, where $I\subseteq\R$ is a closed interval this does not contain any lattice points and the lattice $\Omega$ is real, then the lattice $\Omega$ has complex multiplication. 
A direct extension of this result to semiabelian varieties is presumably false. For example consider the semiabelian variety $G=E\times \mathbb{G}_m$ where $E$ is an elliptic curve with complex multiplication and $\mathbb{G}_m$ is the multiplicative group. Then a restriction of $\exp_G$ to the real part of its fundamental domain will give the exponential map $\exp_E$ but will not give us, presumably, the full real exponential function.  
\par  Now we turn to extending the final aforementioned result of Bianconi to the $\wp$-function. The following theorem can be seen as a $\wp$-function analogue of Theorem 4 in \cite{bianconiundef}.
\begin{theorem}\label{theorem- undef for P theorem}
	Let $D\subseteq \R^{2n}$ be a definable open polydisc and $u,v:D\rightarrow\R$ be two functions that are both definable in the structure $(\Rbar,\wp|_I)$, where $\Omega$ is a complex lattice which does not have complex multiplication and $I$ is some bounded closed interval in $\R$ which does not contain a lattice point. Let $f(x,y)=u(x,y)+iv(x,y)$ be holomorphic in $D$. Then $u$ and $v$ are definable in $\Rbar$.
\end{theorem}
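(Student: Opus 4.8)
The plan is to adapt Bianconi's proof of Theorem~4 of \cite{bianconiundef}, replacing the exponential function by $\wp$ and the Ax--Schanuel theorem for $\exp$ by its analogue for the $\wp$-function of a lattice without complex multiplication. First I would reduce to $n=1$. Identifying $D$ with an open polydisc in $\C^{n}$, the restriction of $f$ to a complex line through a fixed point of $D$ is a holomorphic function of one variable definable in $(\Rbar,\wp|_I)$; once the one-variable case is known, a standard o-minimality argument (bounding uniformly the degrees of the algebraic relations satisfied along the lines and interpolating, cf.\ \cite{newtonpetstar}) recovers semialgebraicity of $f$ from that of all these restrictions. So assume $D\subseteq\C$ is a disc and $f\colon D\to\C$ is holomorphic and definable in $(\Rbar,\wp|_I)$; it suffices to show $f$ is semialgebraic, for then $u=\Re f$ and $v=\Im f$ are definable in $\Rbar$.

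For the structural input I would use that $\wp|_I$ is a restricted analytic function (as $I$ avoids the poles of $\wp$), so $(\Rbar,\wp|_I)$ is o-minimal, together with the model-completeness and o-minimality results for expansions by Weierstrass functions (cf.\ \cite{bianconimc}), to obtain on some open disc $U$ about a generic point of $D$ a ``$\wp$-Pfaffian chain'' representation: real-analytic functions $w_1,\dots,w_m\colon U\to\R$ taking values near $I$ such that each $w_i$ is algebraic over $\R\big(z,\wp(w_1),\wp'(w_1),\dots,\wp(w_{i-1}),\wp'(w_{i-1})\big)$ and $f$ is algebraic over $\R\big(z,\wp(w_1),\wp'(w_1),\dots,\wp(w_m),\wp'(w_m)\big)$. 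Since the $w_i$ are definable, real-valued and real-analytic they extend holomorphically, and $\wp|_I$ extends to the global meromorphic $\wp$; after shrinking $U$ this gives an identity
\[
 f \;=\; R\big(z,\wp(w_1),\wp'(w_1),\dots,\wp(w_m),\wp'(w_m)\big)\quad\text{on }U,
\]
with $R$ rational and the $w_i$ holomorphic on $U$.

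The core of the argument is an induction on the chain length $m$. If $m=0$ then $f$ is algebraic over $\C(z)$ on $U$, hence semialgebraic there, hence semialgebraic on all of $D$ by analytic continuation and the identity theorem. For the inductive step, first use the addition formula for $\wp$ to assume the chain is \emph{reduced}, i.e.\ no nontrivial $\Z$-linear combination of $w_1,\dots,w_m$ and the constant functions is identically a lattice point; otherwise $\wp(w_i),\wp'(w_i)$ for the largest occurring index become algebraic over the field generated by the earlier terms, and $m$ drops. Then continue the displayed identity analytically along paths in $D$: being algebraic over a field of holomorphic functions, the $w_i$ continue to multivalued functions on $D$ with only finitely many branch points, while $\wp$ is meromorphic with poles exactly at lattice points. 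Since $f$ is single-valued and holomorphic on all of $D$, at any branch point of some $w_i$ in $D$ the monodromy must fix the right-hand side of the identity, and at any point of $D$ where some $w_i$ meets a lattice point the resulting pole of $\wp(w_i)$ must cancel. The Ax--Schanuel theorem for $\wp$ --- available precisely because $\Omega$ has no complex multiplication, so that every functional algebraic relation among $z$, the $w_i$, their derivatives and the $\wp(w_i),\wp'(w_i)$ is accounted for by a $\Z$-linear relation, which the reduced hypothesis excludes --- shows that no such monodromy coincidence or cancellation is possible unless $R$ does not involve $\wp(w_m),\wp'(w_m)$ at all; in that case $w_m$ may be deleted and the inductive hypothesis applied. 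This proves $f$ semialgebraic, and the general $n$ follows from the first reduction.

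The main obstacle I expect is this last step: converting the holomorphy of $f$ on the \emph{whole} disc $D$ into a functional algebraic relation of the precise shape forbidden by Ax--Schanuel, and organising the Pfaffian-chain bookkeeping so that the induction genuinely terminates --- in particular, dealing with branch points of the $w_i$ that are not poles of $\wp\circ w_i$, checking that the reduced hypothesis persists when one passes to conjugate branches, and arranging the complexification so that $R$ and the defining equations of the $w_i$ live over a single field independent of the branch (needed for the monodromy argument). Establishing the $\wp$-Pfaffian chain structure for functions definable in $(\Rbar,\wp|_I)$ is the remaining delicate point, though it should follow from the known model theory of these structures.
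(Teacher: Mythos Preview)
Your outline diverges from the paper's argument in a fundamental way, and the divergence is precisely where the gap lies.

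The paper, following Bianconi's actual method in \cite{bianconiundef}, does not use Pfaffian chains, analytic continuation, or monodromy. After the reduction to $n=1$ it invokes Gabrielov's model-completeness (Corollary~\ref{corollary-background-gabrielov- model completeness}) to obtain, via Proposition~\ref{proposition- gab imp definition }, an \emph{implicit} definition: auxiliary real functions $f_2=u,f_3=v,f_4,\dots,f_n$ on a box satisfying a nonsingular polynomial system in the $f_i,\wp(a_if_i),\wp'(a_if_i)$. With $n$ minimal, $\Q$-linear independence of the $f_i-f_i(\alpha)$ is automatic, so Theorem~\ref{theorem-ax-schanuel for P-analytic functions on disc- multiple P functions} yields a lower bound $n+2$ on the relevant transcendence degree. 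The Jacobian rank gives an upper bound, which is pushed down by adjoining the differential equation of $\wp$ in each variable and then two further polynomial relations coming from the Cauchy--Riemann equations for $u,v$. A transversality/perturbation argument (Lemma~\ref{lemma- Undef- perturbation lemma}) shows these last two relations genuinely cut the dimension, producing the contradictory upper bound $n+1$. Holomorphy is used \emph{only} through Cauchy--Riemann, and three separate lattice cases (real, isogenous to $\overline\Omega$, not isogenous) must be handled.

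Your proposal replaces this by an induction on the length of a ``$\wp$-Pfaffian chain'' together with a monodromy argument, and both steps are problematic. First, nothing in \cite{bianconimc} or in Gabrielov's theorem yields a representation $f=R(z,\wp(w_1),\wp'(w_1),\dots)$ with each $w_i$ algebraic over the field generated by the earlier stage; what one gets is a nonsingular \emph{system} defining the $w_i$ jointly, which is exactly what the paper works with. Second, and more seriously, your appeal to Ax--Schanuel in the monodromy step overstates what the theorem says. Ax--Schanuel for $\wp$ gives a transcendence-degree lower bound under $\Q$-linear independence; it does \emph{not} assert that every algebraic relation among $z,w_i,\wp(w_i),\wp'(w_i)$ is forced by a linear one, and it does not convert an identity of the form $R(\dots,\wp(w_m^{\sigma}),\dots)=R(\dots,\wp(w_m),\dots)$ (or a pole-cancellation) into the conclusion that $R$ is independent of its last arguments. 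Moreover, nothing guarantees that the $w_i$ have any branch points or lattice-hits in $D$ at all, so the monodromy mechanism may simply be empty. Without an alternative way to exploit holomorphy---the role played by the Cauchy--Riemann equations in the paper---your induction has no engine to reduce $m$.
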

The proof of this theorem is given in Section \ref{section-proof of theorem} and adapts the method of Bianconi used to prove Theorem 4 in \cite{bianconiundef}. However the final part of the proof differs from Bianconi's argument as some of the conclusions are unclear. Bianconi's method involves using a theorem of Wilkie on smooth functions that are defined implicitly that was proved in general by Jones and Wilkie in \cite{jw}. However here we use an implicit definition obtained from a model completeness result due to Gabrielov in \cite{gab}. Although the theorem of Gabrielov is well known, as far as we are aware this is the first application of this result in order to obtain an implicit definition of this kind. These implicit definitions are given in Section \ref{section-implicit definition}.  
\par 
In Section \ref{section-further results} we give some nondefinability results for various transcendental functions, beginning with an analogue of the aforementioned result of Peterzil and Starchenko in \cite{newtonpetstar} for the Weierstrass $\wp$-function. Then we give a characterisation of the definability of restrictions of $\wp$ to a disc $D\subseteq\C$ in terms of the associated lattice $\Omega$ having complex multiplication, one direction of which follows from Theorem \ref{theorem- undef for P theorem}. This extends the result in \cite{mcculloch2020nondefinability} to all complex lattices. To complete this section we give a nondefinability result for the modular $j$-function the proof of which adapts a similar method to the proof of Theorem \ref{theorem- undef for P theorem}. Finally in Section \ref{section- final remarks} we give some concluding remarks on what other transcendental functions can give rise to similar nondefinability statements and the obstacles that prevent one from proving a version of Theorem \ref{theorem- undef for P theorem} for such functions using the method of Section \ref{section-proof of theorem}.

\section{The Weierstrass $\wp$ and modular $j$ functions}\label{section- p function background}
In this section we give background on both the Weierstrass $\wp$-function and the modular $j$-function.

\begin{definition}\label{definition- period ratio}
	Let $\Omega\subseteq\C$. Then $\Omega$ is said to be a \textit{complex lattice} if there exist complex numbers $\omega_1$ and $\omega_2$ such that $\Omega=\{ m\omega_1+n\omega_2:m,n\in\Zbb,\Im(\omega_2/\omega_1)>0 \}$. The set $\{ \omega_1,\omega_2  \}$ is referred to as an oriented basis for the lattice $\Omega$. The quotient $\tau=\omega_2/\omega_1\in\Hbb$ is known as the \textit{period ratio} of $\Omega$. The lattice generated by 1 and $\tau$ is denoted $\Omega_{\tau}=\langle1,\tau\rangle$.
\end{definition}

The following theorem can be seen in Chapter 3 of \cite{kc}.
\begin{theorem}
	For all $z\in\C\setminus\Omega$ we have that,
	
	\begin{equation}\label{differential equation for P}
	(\wp'(z))^2=4\wp^3(z)-g_2\wp(z)-g_3.
	\end{equation}
\end{theorem}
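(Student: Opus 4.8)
The plan is to show that the function
\[
F(z) \;:=\; (\wp'(z))^2 - 4\wp^3(z) + g_2\wp(z) + g_3
\]
is an $\Omega$-periodic function which is holomorphic on all of $\C$ and vanishes at the origin, so that by Liouville's theorem it is identically zero, which is exactly \eqref{differential equation for P}.

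First I would record the Laurent expansion of $\wp$ about $z=0$. Writing $G_{2k} = \sum_{\omega\in\Omega\setminus\{0\}} \omega^{-2k}$ for the Eisenstein series attached to $\Omega$ (these converge absolutely for $k\ge 2$), one expands each summand $\tfrac{1}{(z-\omega)^2}-\tfrac{1}{\omega^2}$ as a power series in $z/\omega$ and rearranges the resulting absolutely convergent double sum to obtain
\[
\wp(z) \;=\; \frac{1}{z^2} + \sum_{k=2}^{\infty} (2k-1)\,G_{2k}\,z^{2k-2},
\]
valid for $0<|z|<\min_{\omega\neq 0}|\omega|$. Recalling the definitions $g_2 = 60\,G_4$ and $g_3 = 140\,G_6$, term-by-term differentiation gives the leading terms of $\wp$, $\wp'$, $\wp^3$ and $(\wp')^2$ about the origin, and a short computation shows that the principal part and constant term of $(\wp')^2 - 4\wp^3$ are cancelled by those of $g_2\wp + g_3$; hence $F(z) = O(z^2)$ as $z\to 0$, so $F$ is holomorphic at $0$ with $F(0)=0$.

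Next I would use the two defining features of $\wp$ that are immediate from its series definition (again by absolute convergence): it is $\Omega$-periodic, and its only singularities are double poles at the points of $\Omega$. Consequently $\wp'$ is also $\Omega$-periodic with poles only at lattice points, so $F$ is $\Omega$-periodic and its only possible singularities lie in $\Omega$. By periodicity together with the local computation above, $F$ is holomorphic at every point of $\Omega$ as well, hence extends to an entire function. An entire $\Omega$-periodic function attains all of its values on a compact fundamental parallelogram and is therefore bounded, so Liouville's theorem forces $F$ to be constant; since $F(0)=0$ we get $F\equiv 0$.

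The argument is entirely classical, so there is no genuine obstacle here; the only step requiring care is the rearrangement of the doubly infinite sum defining $\wp$ used to justify the Laurent expansion, and the bookkeeping of its first few coefficients — this must be carried out precisely enough to confirm that the $z^{-6}$, $z^{-2}$ and constant terms of $(\wp')^2 - 4\wp^3$ are exactly those of $-g_2\wp - g_3$.
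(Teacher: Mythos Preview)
Your argument is correct and is precisely the classical proof of the differential equation for $\wp$. The paper does not actually prove this statement: it is recorded as background and attributed to Chapter~3 of \cite{kc}, so there is no in-paper proof to compare against; your write-up would serve perfectly well as a self-contained replacement for that citation.
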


Therefore the functions $\wp$ and $\wp'$ are algebraically dependent. Differentiating both sides of this differential equation gives that 

\begin{equation}\label{equation- P'' formula}
\wp''(z)=6\wp^2(z)-\frac{g_2}{2}.
\end{equation} 

In particular for any $n\ge2$ the derivative $\wp^{(n)}$ may be written as a polynomial with complex coefficients in $\wp$ and $\wp'.$ Another crucial property of $\wp$ is its addition formula. This can be seen in Theorem 6 in Chapter 3 of \cite{kc}.
\begin{theorem}
	For complex numbers $z$ and $w$ such that $z-w\notin\Omega$ we have that, 
	
	\begin{equation}\label{equation- addition formula for P}
	\wp(z+w)=\frac{1}{4}\left(\frac{\wp'(z)-\wp'(w)}{\wp(z)-\wp(w)}\right)^2-\wp(z)-\wp(w).
	\end{equation}
	
\end{theorem}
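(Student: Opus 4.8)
The plan is to derive the formula from the group law on the elliptic curve cut out by \eqref{differential equation for P}, following the classical argument; I would prove it first for $z,w$ in ``general position'' and then recover the full statement by continuity. So suppose $z,w,z+w\notin\Omega$ and $z\not\equiv\pm w\pmod\Omega$, so that in particular $\wp(z)\neq\wp(w)$ and, for generic such $z,w$, the three numbers $\wp(z),\wp(w),\wp(z+w)$ are pairwise distinct. By \eqref{differential equation for P} the points $(\wp(z),\wp'(z))$ and $(\wp(w),\wp'(w))$ lie on the affine curve $Y^{2}=4X^{3}-g_2X-g_3$, and since their $X$-coordinates differ there is a unique line $Y=mX+b$ through them, with slope $m=(\wp'(z)-\wp'(w))/(\wp(z)-\wp(w))$.

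The key step is to identify the third point of the curve on this line. Consider $h(\zeta)=\wp'(\zeta)-m\wp(\zeta)-b$: this is an elliptic function for $\Omega$ whose only pole in a fundamental parallelogram is a triple pole at $\zeta=0$. By the classical theory of elliptic functions it therefore has exactly three zeros there, counted with multiplicity, whose sum is congruent to $0$ modulo $\Omega$; since $z$ and $w$ are two of them, the third is $\equiv-(z+w)\pmod\Omega$. As $\wp$ is even and $\wp'$ is odd, this says that $(\wp(z+w),-\wp'(z+w))$ also lies on $Y=mX+b$. Substituting $Y=mX+b$ into the cubic yields
\[
4X^{3}-m^{2}X^{2}-(g_2+2mb)X-(g_3+b^{2})=0,
\]
whose three roots are precisely $\wp(z),\wp(w),\wp(z+w)$. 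Comparing the coefficient of $X^{2}$, the sum of the roots equals $m^{2}/4$, i.e.
\[
\wp(z)+\wp(w)+\wp(z+w)=\frac14\left(\frac{\wp'(z)-\wp'(w)}{\wp(z)-\wp(w)}\right)^{2},
\]
which is the asserted identity after rearranging. To finish, I would fix $w$ with $2w\notin\Omega$ and observe that both sides are meromorphic in $z$ and agree on a set with an accumulation point, hence agree wherever defined; this also covers the configurations where two of $\wp(z),\wp(w),\wp(z+w)$ coincide (the cubic then has a repeated root, but Vieta still gives the sum of roots with multiplicity).

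The one genuinely non-formal ingredient is the fact used about elliptic functions: that a non-constant elliptic function has equally many zeros and poles in a fundamental parallelogram and that $\sum(\text{zeros})\equiv\sum(\text{poles})\pmod\Omega$ (proved by integrating $f'/f$ and $\zeta f'(\zeta)/f(\zeta)$ around the boundary of a period parallelogram). This is what pins down the third zero of $h$ and is the heart of the proof. If one wishes to avoid quoting it, an alternative is to fix generic $w$, set
\[
F(\zeta)=\wp(\zeta+w)-\frac14\left(\frac{\wp'(\zeta)-\wp'(w)}{\wp(\zeta)-\wp(w)}\right)^{2}+\wp(\zeta)+\wp(w),
\]
check directly that $F$ is $\Omega$-periodic and that each of its candidate poles is removable --- at $\zeta\equiv w$ the apparent pole is a $0/0$ resolving to $\wp''(w)/\wp'(w)$, while at $\zeta\equiv0$ and $\zeta\equiv-w$ the $\frac14(\cdots)^{2}$ term has leading Laurent term $\zeta^{-2}$ resp.\ $(\zeta+w)^{-2}$, cancelling the double pole of $\wp(\zeta)$ resp.\ $\wp(\zeta+w)$, and a short Laurent computation removes the residual simple poles --- so that $F$ is entire and bounded, hence constant, with the constant seen to be $0$ by matching Laurent expansions at $\zeta=0$. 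Either way, the elliptic-function bookkeeping is the only step requiring real care.
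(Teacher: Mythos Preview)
Your argument is the classical one and is correct: the line through $(\wp(z),\wp'(z))$ and $(\wp(w),\wp'(w))$ meets the cubic in a third point whose $X$-coordinate is $\wp(z+w)$ by the zero-sum relation for the elliptic function $h(\zeta)=\wp'(\zeta)-m\wp(\zeta)-b$, and Vieta on the resulting cubic in $X$ gives the formula. Your handling of the degenerate configurations by analytic continuation (fixing $w$ with $2w\notin\Omega$ and invoking the identity theorem) is also fine, and the alternative via showing $F$ is an elliptic function without poles is a legitimate and self-contained route.

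There is nothing to compare against, however: the paper does not prove this theorem. It is stated as background and attributed to Theorem~6 in Chapter~3 of \cite{kc}; the author simply quotes the addition formula in Section~\ref{section- p function background} and moves on. So your write-up supplies strictly more than the paper does here. If you want it to match the level of detail appropriate for a background section, a one-line citation would suffice; if you want a genuine proof in the text, what you have is the standard one and needs no change beyond perhaps trimming the second paragraph (the Liouville-type alternative), which is redundant once the first argument is in place.
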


The function $\wp'$ also has an addition formula. However this is less well known and may be deduced from the identity

\begin{equation}\label{equation- P function matrix identity}
\begin{vmatrix}
\wp(z)&\wp'(z)&1\\\wp(w)&\wp'(w)&1\\\wp(z+w)&-\wp'(z+w)&1
\end{vmatrix}=0,
\end{equation}

\noindent
which can be seen in page 363 of \cite{copson}. From this identity we have for all complex numbers $z$ and $w$ such that $z-w\notin\Omega$,

\begin{equation}\label{equation- addition formula for P'}
\wp'(z+w)=\frac{\wp(w)\wp'(z)-\wp'(w)\wp(z)-\wp(z+w)(\wp'(z)-\wp'(w))}{\wp(z)-\wp(w)}.
\end{equation}

This next definition can be seen in Section 4 of Chapter 1 of \cite{silvermanadvanced}. 
\begin{definition}
	The modular $j$-function is the function $j:\Hbb\rightarrow\C$ defined by, 
	
	$$j(\tau)=1728\frac{g_2^3(\tau)}{g_2^3(\tau)-27g_3^2(\tau)},$$ where the complex numbers $g_2$ and $g_3$ are the invariants of the complex lattice $\Omega$ with period ratio $\tau$. 
\end{definition}
It turns out that the modular $j$-function may be written rather differently, namely it has a $q$-expansion with (positive) integer coefficients. This may be seen in Proposition 7.4 in Chapter 1 of \cite{silvermanadvanced} and the explicit coefficients are in Example 6.2.2 of Chapter 2 of \cite{silvermanadvanced}.

\begin{proposition}
	Let $q=e^{2\pi i z}$. Then,
	$$j(z)=q^{-1}+744+196884q+21493760q^2+\dots.$$
\end{proposition}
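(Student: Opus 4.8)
The plan is to reduce the claim to the classical Fourier expansions of the Eisenstein series. Recall that the invariants of the lattice $\Omega_\tau$ can be written as $g_2(\tau)=60\,G_4(\tau)$ and $g_3(\tau)=140\,G_6(\tau)$, where $G_{2k}(\tau)=\sum_{(m,n)\neq(0,0)}(m+n\tau)^{-2k}$; normalising by $E_{2k}=G_{2k}/(2\zeta(2k))$ and using $\zeta(4)=\pi^4/90$, $\zeta(6)=\pi^6/945$, one gets $g_2=\tfrac{4\pi^4}{3}E_4$ and $g_3=\tfrac{8\pi^6}{27}E_6$. So it suffices to know the $q$-expansions of $E_4$ and $E_6$.

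First I would establish the two identities
\[
E_4(z)=1+240\sum_{n\geq1}\sigma_3(n)q^n,\qquad E_6(z)=1-504\sum_{n\geq1}\sigma_5(n)q^n,
\]
where $q=e^{2\pi i z}$ and $\sigma_k(n)=\sum_{d\mid n}d^k$. The key input is the Lipschitz summation formula $\sum_{n\in\Z}(z+n)^{-k}=\frac{(-2\pi i)^k}{(k-1)!}\sum_{d\geq1}d^{k-1}q^d$ for $k\geq2$, obtained by repeatedly differentiating the partial-fraction expansion of $\pi\cot\pi z$ and comparing it with $\pi\cot\pi z=\pi i-2\pi i\sum_{d\geq0}q^d$. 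Splitting the double sum defining $G_{2k}$ into its $n=0$ term and the rows $n\neq0$, and evaluating each row by this formula, yields the displayed expansions.

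Next I would compute the discriminant $\Delta:=g_2^3-27g_3^2=\tfrac{64\pi^{12}}{27}\bigl(E_4^3-E_6^2\bigr)$. By Jacobi's product formula $E_4^3-E_6^2=1728\,q\prod_{n\geq1}(1-q^n)^{24}$, so $\Delta=(2\pi)^{12}q\,u$ where $u:=\prod_{n\geq1}(1-q^n)^{24}\in\Z[[q]]$ has constant term $1$ and hence is a unit of $\Z[[q]]$. Since $j=1728\,g_2^3/\Delta=E_4^3\cdot q^{-1}\cdot u^{-1}$ and $E_4^3\in\Z[[q]]$, it follows at once that $j\in\Z((q))$ with a simple pole at $q=0$; and expanding $E_4^3=1+720q+179280q^2+\cdots$ and dividing by $qu=q-24q^2+\cdots$ recovers $j(z)=q^{-1}+744+196884q+21493760q^2+\cdots$.

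The only delicate point is the bookkeeping in the middle step: one must keep the powers of $\pi$ and the values $\zeta(2k)$ straight so that they cancel in the ratio $g_2^3/\Delta$, and justify the rearrangements of the absolutely convergent lattice sums. The other place where a genuinely non-formal fact is used is the integrality of every coefficient, which here is supplied by Jacobi's product formula for $\Delta$; if one preferred not to cite it, one would instead have to verify directly that $1728$ divides each Fourier coefficient of $E_4^3-E_6^2$. Given either input, the stated expansion is then a one-line power-series computation.
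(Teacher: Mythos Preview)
Your argument is correct and is in fact the standard derivation of the $q$-expansion of $j$ via the Eisenstein series and Jacobi's product formula for $\Delta$; the numerical checks (the normalising constants $\tfrac{4\pi^4}{3}$, $\tfrac{8\pi^6}{27}$, the first few coefficients of $E_4^3$ and of $q\prod(1-q^n)^{24}$) are all accurate.

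There is nothing to compare against, however: the paper does not supply its own proof of this proposition but simply quotes it, with a reference to Proposition~7.4 of Chapter~1 and Example~6.2.2 of Chapter~2 of Silverman's \emph{Advanced Topics in the Arithmetic of Elliptic Curves}. Your sketch is essentially the proof found there, so you have filled in exactly what the paper outsources. The only remark I would make is that invoking Jacobi's product formula is itself a substantial input (roughly equivalent in difficulty to what you are proving), and your alternative---checking directly that $1728\mid (E_4^3-E_6^2)$ coefficientwise---is the more self-contained route if one wants integrality without appealing to the $\eta$-product.
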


\begin{remark}\label{remark- restriction of j is real valued}
	From the $q$-expansion it is clear that the restriction of $j$ to $\Hbb\cap i\R$ is a real valued function.
\end{remark}

By Theorem 4.1 in \cite{silvermanadvanced} the $j$-function is a modular function of weight zero. That is, for all $z,w\in\C$ we have that $j(z)=j(w)$ if and only if there is some matrix $\gamma\in SL_2(\Zbb)$ such that 

$$w=\frac{az+b}{cz+d},\text{ where }\gamma=\begin{pmatrix}
a&b\\c&d
\end{pmatrix}.$$ 

If $\gamma$ is a matrix in $GL^+_2(\mathbb{Q})$, the group of $2\times2$ matrices with rational entries and positive determinant, then there is a unique positive integer $M$ such that $M\gamma\in GL_2(\mathbb{Z})$ and the entries of $M\gamma$ are relatively prime. By Proposition 23 in \cite{123modularforms-zagier} we have that for each positive integer $M$ there is a polynomial $\Phi_M\in\mathbb{Z}[X,Y]$ such that $\Phi_M(j(z),j(w))=0$ if and only if there is a matrix $\gamma\in GL^+_2(\mathbb{Q})$ such that $z=\gamma w$ and $\det(M\gamma)=M$. Finally we note as in \cite{axforj} that $j$ satisfies a nonlinear third order differential equation, namely 

\begin{equation}\label{differential equation for j}
\frac{j'''}{j'}-\frac{3}{2}\left(\frac{j''}{j'}\right)^2+\left(\frac{j^2-1968j+2654208}{2j^2(j-1728)^2}\right)(j')^2=0.
\end{equation}
To conclude this section we state the versions of the Ax-Schanuel theorem for the Weierstrass $\wp$-function and the modular $j$-function. For the $\wp$-function this is due to Brownawell and Kubota and can be seen in \cite{bkpaper}.
\begin{theorem}\label{theorem-ax-schanuel for P-analytic functions on disc- multiple P functions}
	Suppose $\Omega_1,\dots,\Omega_m$ are complex lattices each of which does not have complex multiplication. Let $\tau_1,\dots,\tau_m$ be their corresponding period ratios and $\wp_1,\dots,\wp_m$ be their corresponding $\wp$-functions. Suppose that for all $i,j=1,\dots,m$ and $i\ne j$ there do not exist integers $a,b,c,d$ with $ad-bc\ne0$ such that $$\tau_j=\frac{a\tau_i+b}{c\tau_i+d}.$$
	Let $z_1,\dots,z_n$ be analytic functions on a disc $D$ centred at $\alpha\in\C$ and suppose that $z_1-z_1(\alpha),\dots,z_n-z_n(\alpha)$ are linearly independent over $\Q$. Then we have that 
	
	$$\trdeg_\C\C[z_1,\dots,z_n,\wp_1(z_1),\dots,\wp_1(z_n),\dots,\wp_m(z_1),\dots,\wp_m(z_n)]\ge nm+1.$$
\end{theorem}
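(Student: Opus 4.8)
The plan is to convert this analytic transcendence statement into a question in differential algebra and then run Ax's differential-algebraic strategy, modified to carry along the differential equation \eqref{differential equation for P} and the addition law \eqref{equation- addition formula for P} of $\wp$.

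First I would pass to germs at $\alpha$. After harmlessly shrinking $D$ and re-centering if necessary, all of $z_1,\dots,z_n$ and all of the compositions $\wp_k(z_i)$ are germs of meromorphic functions at $\alpha$, hence lie in the differential field $K$ of such germs equipped with $D=d/dz$, whose constant field is $\C$. Write $P_{ki}=\wp_k(z_i)$ and $Q_{ki}=\wp_k'(z_i)$, and let $E_k=E_{\Omega_k}$ have invariants $g_2^{(k)},g_3^{(k)}$. The chain rule and \eqref{differential equation for P} give, inside $K$,
\[
D P_{ki}=Q_{ki}\,Dz_i,\qquad Q_{ki}^{\,2}=4P_{ki}^{\,3}-g_2^{(k)}P_{ki}-g_3^{(k)},
\]
and \eqref{equation- addition formula for P} together with \eqref{equation- addition formula for P'} show that $\wp_k$ of any $\Z$-linear combination of the $z_i$ is again in $K$ and rational over $\C$ in the $P_{ki}$ and $Q_{ki}$. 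Since each $Q_{ki}$ is algebraic over $\C(P_{ki})$, the assertion is equivalent to the purely algebraic inequality $\trdeg_\C\C(z_1,\dots,z_n,(P_{ki})_{k,i})\ge nm+1$. (One could at this point instead try to deduce the statement from an Ax--Schanuel theorem for the abelian variety $E_1\times\cdots\times E_m$; I will sketch the direct route.)

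Next I would argue by contradiction: assume this transcendence degree is at most $nm$, and extract a nontrivial $\Q$-linear relation among $z_1-z_1(\alpha),\dots,z_n-z_n(\alpha)$, contrary to hypothesis. Fix a transcendence basis of $\C(z_i,P_{ki})$ over $\C$, take a minimal algebraic relation witnessing the resulting deficiency, and exploit the group law on the $E_k$ — concretely, that the differential $dP_{ki}/Q_{ki}$ equals the translation-invariant $dz_i$ — to show that a minimal such relation must itself be translation-invariant, hence descends to a relation not involving the group coordinates at all, and, because the differentials enter only linearly via $dP_{ki}=Q_{ki}\,dz_i$, ultimately to a $\C$-linear, hence $\Q$-linear, relation among the $z_i-z_i(\alpha)$. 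The two hypotheses on the lattices are exactly what keep this descent clean: $\Omega_k$ having no complex multiplication forces $\operatorname{End}(E_k)=\Z$, so the only internal linear relations of $\wp_k$ are defined over $\Q$; and the absence of a $GL^+_2(\Q)$-relation between distinct $\tau_i$ and $\tau_j$ means $E_i$ and $E_j$ are non-isogenous, so no algebraic correspondence ties $\wp_i$ to $\wp_j$ and the transcendence contributed by the $m$ lattices accumulates without collapse to the full $nm$.

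The technical heart — and the step I expect to be the main obstacle — is making this descent precise. In Ax's exponential case it amounts to the computation that a certain Jacobian of logarithmic derivatives has maximal rank unless a linear dependence already holds; here one must push the differential equation $(\wp_k')^2=4\wp_k^3-g_2^{(k)}\wp_k-g_3^{(k)}$ and the addition law through the same computation, with the invariant differential $dP_{ki}/Q_{ki}$ on $E_k$ playing the role that $d\log$ plays on $\mathbb{G}_m$, and in the several-variable version one must control all $m$ curves simultaneously. To separate the contributions of the pairwise non-isogenous $E_k$ I would use a specialization or valuation argument on the Taylor coefficients of the $z_i$ — equivalently, Kolchin's irreducibility theorem for the differential ideals cutting out the $\wp_k$ — which is the precise place where non-isogeny enters. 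This is essentially the argument carried out by Brownawell and Kubota in \cite{bkpaper}, whose treatment I would follow for the remaining calculations.
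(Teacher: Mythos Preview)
The paper does not give its own proof of this theorem: it is stated in Section~\ref{section- p function background} as background and attributed to Brownawell and Kubota, with the reference \cite{bkpaper} in lieu of an argument. So there is nothing substantive to compare your proposal against; the ``paper's proof'' is simply the citation.

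That said, your sketch is headed in the right direction and you correctly identify both the source and the shape of the argument. Passing to the differential field of germs at $\alpha$, recording the relations $DP_{ki}=Q_{ki}\,Dz_i$ and $Q_{ki}^2=4P_{ki}^3-g_2^{(k)}P_{ki}-g_3^{(k)}$, and then running an Ax-style descent using the invariant differentials $dP_{ki}/Q_{ki}$ on the non-isogenous curves $E_k$ is exactly the Brownawell--Kubota strategy. Your reading of the two hypotheses is also correct: absence of complex multiplication forces the endomorphism ring to be $\Z$, and the $GL_2^+(\Q)$-independence of the $\tau_k$ is precisely non-isogeny of the $E_k$, which prevents collapse across the $m$ factors. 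The one place your outline is genuinely incomplete is the ``technical heart'' you flag yourself: turning the minimal-relation argument into an honest rank computation for the invariant differentials, and separating the contributions of the $m$ curves, is real work, and you would indeed need to follow \cite{bkpaper} (or invoke an Ax--Schanuel theorem for the abelian variety $E_1\times\cdots\times E_m$) rather than improvise it. For the purposes of the present paper, however, a citation suffices, and your proposal already supplies that.
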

The version of the Ax-Schanuel theorem for $j$ is due to Pila and Tsimerman in \cite{axforj}.
\begin{theorem}\label{theorem-ax schanuel for j-analytic functions}
	Let $z_1,\dots,z_n$ be analytic functions defined on a disc $D\subseteq \C$, which take values in the upper half plane, such that $j(z_1),\dots,j(z_n)$ are non-constant. Suppose that $\Phi_M(j(z_i),j(z_j))\ne0$ for all positive integers $M$ and for all $i,j=1,\dots,n$ where $i\ne j$. Then,
	
	$$\trdeg_{\mathbb{C}}\C[z_1,\dots,z_n,j(z_1),\dots,j(z_n),j'(z_1),\dots,j'(z_n),j''(z_1),\dots,j''(z_n)]\ge 3n+1.$$
\end{theorem}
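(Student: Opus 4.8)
This is the Ax-Schanuel theorem for the modular $j$-function, and the plan is to follow the strategy of Pila and Tsimerman: recast the statement as one about atypical intersections with a leaf of an algebraic foliation, and then settle it by the Pila-Zannier method, combining o-minimality with point counting. First I would reformulate the conclusion geometrically. Since the order-three equation \eqref{differential equation for j} expresses $j'''$ as a rational function of $j,j',j''$, the locus $\Gamma$ of all points $(z_1,\dots,z_n,j(z_1),j'(z_1),j''(z_1),\dots,j(z_n),j'(z_n),j''(z_n))$ with $z_i\in\Hbb$ is an $n$-dimensional complex analytic leaf of an algebraic foliation of codimension $3n$ on $\C^n\times\C^{3n}$. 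The tuple of analytic functions in the theorem traces out a one-dimensional analytic piece $A$ of one such leaf; writing $W$ for the Zariski closure of $A$ in $\C^n\times\C^{3n}$, the desired inequality is exactly $\dim W\ge 3n+1$. So it suffices to prove the following atypical-intersection statement: if an irreducible component $A'$ of $W\cap\Gamma$ satisfies $\dim A'>\dim W-3n$, then the projection of $A'$ to $\Hbb^n$ is contained in a proper \emph{weakly special} subvariety, that is, one cut out by finitely many conditions of the form $z_i=g_{ij}\!\cdot\! z_j$ with $g_{ij}\in GL_2^+(\Q)$ together with conditions $z_k=\text{const}$. Granting this, suppose for contradiction that $\dim W\le 3n$; then $A$ is contained in an irreducible component $A'$ of $W\cap\Gamma$ with $\dim A'\ge 1>\dim W-3n$, which is therefore atypical, so the $\Hbb^n$-projection of $A$ lies in a proper weakly special subvariety. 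But such a subvariety imposes either a relation $z_i=g\!\cdot\! z_j$ (forcing $\Phi_M(j(z_i),j(z_j))=0$ for the level $M$ of $g$, and including $j(z_i)=j(z_j)$ when $g$ is trivial) or a constancy condition (forcing some $j(z_i)$ constant). Either way the hypotheses are contradicted, so $\dim W=\trdeg_\C\C[z_1,\dots,z_n,j(z_1),\dots,j''(z_n)]\ge 3n+1$.

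To prove the atypical-intersection statement I would run the counting argument. The crucial analytic input is definability: by the work of Peterzil and Starchenko on the $j$-function, the restriction of the jet map $z\mapsto(j(z),j'(z),j''(z))$ to a fixed fundamental domain $\mathcal F$ for the action of $SL_2(\Z)$ on $\Hbb$ is definable in the o-minimal structure $\mathbb{R}_{\mathrm{an},\exp}$; taking into account the automorphy factors by which $j'$ and $j''$ transform under $SL_2(\Z)$, one obtains a definable incidence set whose points record tuples $\gamma=(\gamma_1,\dots,\gamma_n)\in(SL_2(\R))^n$ together with a point of $W$ whose $\Hbb^n$-coordinate, after translating each block into $\mathcal F$ by $\gamma_i$, carries the matching $j$-jets. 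One then shows that atypicality of $A'$ forces this set to contain $\gg T^{\delta}$ points with $\gamma\in(SL_2(\Z))^n$ of height at most $T$, for some $\delta>0$: this polynomial lower bound is the arithmetic heart of the argument, coming from the geometry of $W$ together with lattice-point estimates in $SL_2(\Z)^n$, and the extra dimension $\dim A'>\dim W-3n$ is precisely what makes the count super-polynomial compared with the Pila-Wilkie bound. By the Pila-Wilkie counting theorem these rational points cannot all lie in the transcendental part of the definable set, so it contains a positive-dimensional semialgebraic family; unwinding this family produces a positive-dimensional algebraic family of elements of $GL_2^+(\Q)^n$ carrying a piece of the uniformizing leaf into $W$.

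The final step is to convert this family into weak specialness. Here I would invoke the Ax-Lindemann theorem for $j$ — proved by the same o-minimality-plus-counting technique, with Hecke translates in place of the translates above — to identify the Zariski closure of such a family of translates of the uniformizing leaf with a weakly special subvariety, and then use a monodromy computation: the algebraic monodromy of the universal family of elliptic curves over the modular curve is Zariski-dense in $SL_2$, there are no relations between distinct blocks precisely because no $\Phi_M(j(z_i),j(z_j))$ vanishes, and this rigidity pins down the algebraic varieties $W$ capable of producing an atypical component, forcing the projection of $A'$ to $\Hbb^n$ into a proper weakly special subvariety of exactly the excluded type. I expect this monodromy-and-structure step to be the \textbf{main obstacle}: turning the bare output of Pila-Wilkie (a positive-dimensional family of atypical intersections) into the structural conclusion requires the full $j$-analogue of the André-Oort machinery, and there is substantial extra bookkeeping in carrying the derivatives $j'$ and $j''$ through the whole argument — one works on the relevant jet space and must check that the foliation determined by \eqref{differential equation for j} and its leaves behave algebraically. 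By contrast, the definability of $j|_{\mathcal F}$ and its jets, and the volume and lattice-point estimates feeding the count, are by now routine.
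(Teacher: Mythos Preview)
The paper does not prove this theorem at all: it is stated as a background result ``due to Pila and Tsimerman in \cite{axforj}'' and used as a black box in the proof of Theorem~\ref{nondef j theorem}. There is therefore no paper proof to compare against. Your sketch is a reasonable high-level outline of the Pila--Tsimerman argument itself (reformulate as an atypical-intersection statement for a leaf of the foliation cut out by \eqref{differential equation for j}, invoke definability of $j$ on a fundamental domain, apply Pila--Wilkie, and finish with Ax--Lindemann plus monodromy), but for the purposes of this paper a citation is all that is required.
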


\section{Implicit definitions}\label{section-implicit definition}
The purpose of each of these implicit definitions is to give a low upper bound on the transcendence degree of a finitely generated extension of $\C$. Before giving the first of these implicit definitions we give a precise definition of a property used in the statement of these implicit definitions.
\begin{definition} 
	
	Let $\F$ be a countable collection of real analytic functions defined on a bounded interval $I$ in $\R$. Let $f\in \F$. If the derivatives of $f$ may be written as a polynomial with coefficients in $\C$ in terms of a finite number of the functions in $\F$ then we say that the set $\F$ is \textit{closed under differentiation}.
	
	\par 
	
	Consider the structure $(\Rbar,\F)$ with $\F$ as above. Then if all the derivatives of the functions defined by terms are also defined by terms we say that the structure $(\Rbar,\F)$ has a \textit{ring of terms that is closed under differentiation.} 
\end{definition}

\subsection{Desingularisation}\label{desingularisation section}\label{section-desingularisation theorem}
\par 
The first implicit definition comes from ideas of Wilkie in \cite{wilkieexp} and is referred to by Bianconi in \cite{bianconi} as the Desingularisation Theorem. A more general form of this implicit definition was proved by Jones and Wilkie in \cite{jw}. Let $\tilde\R=(\Rbar,\F)$ be an expansion of $\Rbar$ by a set $\F$ of total analytic functions in one variable, closed under differentiation. We also assume that $\tilde\R$ has a model complete theory and as $\F$ is closed under differentiation the ring of terms of $\tilde\R$ is closed under differentiation. Before stating the first implicit definition we give a definition.

\begin{definition}
	Let $f_1:I\rightarrow\R$, for some open interval $I\subseteq\R$, be a function definable in the structure $\tilde\R=(\Rbar,\F)$. Then we say that $f_1$ is \textit{implicitly $\F$-defined} if there are some integers $n,l\ge 1$, polynomials $P_1,\dots,P_n$ in $\R[y_1,\dots,y_{(l+1)(n+1)}]$ and functions $f_2,\dots,f_n:I\rightarrow\R$ such that for all $z\in I$, 
	
	$$\begin{array}{ccc}
	F_1(z,f_1(z),\dots,f_n(z))=0\\\vdots\\F_n(z,f_1(z),\dots,f_n(z))=0
	\end{array}$$
	\noindent
	and $$\det\left(\frac{\partial F_i}{\partial x_j}\right)_{\substack{i=1,\dots,n\\j=2,\dots,n+1}}(z,f_1(z),\dots,f_n(z))\ne 0,$$ where 
	
	\begin{align*}F_i(z,f_1(z),\dots,f_n(z))=P_i(&z,f_1(z),\dots,f_n(z),\\&g_1(z),g_1(f_1(z)),\dots,g_1(f_n(z)),
	\dots,\\&g_l(z),g_l(f_1(z)),\dots,g_l(f_n(z)))\end{align*} 
	for $g_1,\dots,g_l\in\F.$ 
\end{definition}

\begin{theorem}[Jones \& Wilkie]\label{desingularisation theorem}\label{theorem-desingularisation}
	Let $f:I\rightarrow\R$, for some open interval $I\subseteq\R$, be a definable function in $\tilde\R$. Then there are subintervals $I_1,\dots,I_m\subseteq I$ such that $I\setminus\left(\cup_{k=1}^m I_k\right)$ is a finite set and $f$ is implicitly $\F$-defined on each of these subintervals.

\end{theorem}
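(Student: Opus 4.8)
The plan is to move from the existential description of $f$ furnished by model completeness to a square system of $\F$-term equations whose Jacobian is non-zero along the graph of $f$, refining $I$ to a finite union of subintervals as the argument proceeds. First I would fix a normal form: since $\tilde\R$ is model complete the graph of $f$ is existentially definable, so after the routine flattening of terms --- using that the members of $\F$ are unary and total --- it is a finite union of sets $\{(z,y): z\in I,\ \exists w_1,\dots,w_p\ \bigl(\bigwedge_k F_k(z,y,\bar w)=0 \wedge \bigwedge_l H_l(z,y,\bar w)>0\bigr)\}$, where each $F_k$ and $H_l$ is a polynomial in $z,y,\bar w$ and in finitely many values $g(\sigma)$, $g\in\F$, $\sigma$ a polynomial. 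Adjoining one auxiliary variable and one trivial polynomial equation for each inner polynomial $\sigma$, I may assume each occurring $g\in\F$ is applied only to a single variable, which is the shape required by the definition of ``implicitly $\F$-defined''; such adjunctions contribute only a row and column with a $1$ on the diagonal and so never disturb later Jacobian computations.

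Next I would produce an analytic witness curve. Via Khovanskii-type finiteness for zero sets of $\F$-terms together with model completeness, $\tilde\R$ is o-minimal (as in Wilkie's treatment of $\Rexp$), so $f$ is piecewise real analytic; hence $I$ minus a finite set decomposes into open intervals on each of which $f$ is analytic, and I fix one, say $I_0$. By definable choice there is a definable $\bar w\colon I_0\to\R^p$ with $\theta(z,f(z),\bar w(z))$ holding throughout, $\theta$ the quantifier-free matrix of the existential formula; after deleting finitely many points of $I_0$ the map $\bar w$ is analytic and the truth value of each atomic subformula of $\theta$ is constant along the analytic curve $\gamma(z):=(z,f(z),\bar w(z))$. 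Thus a fixed finite system $F_1=\dots=F_N=0$ of $\F$-term equations holds identically along $\gamma$, while the surviving atoms are strict inequalities, holding on a neighbourhood of $\gamma(I_0)$.

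Then comes the desingularisation. Writing $\gamma=(z,f_1,\dots,f_n)$ with $f_1=f$ and $n=p+1$, I view $\Psi=(F_1,\dots,F_N)$ as a real-analytic map near $\gamma(I_0)$ and let $r$ be the generic rank along $\gamma$ of the Jacobian $\bigl(\partial F_k/\partial x_j\bigr)_{k\le N,\ 2\le j\le n+1}$; off a finite subset of $I_0$ the rank is $r$ at every point. If $r=n$ I pick $n$ rows whose $n\times n$ minor in the columns $x_2,\dots,x_{n+1}$ is non-zero at one point of $\gamma(I_0)$, hence --- discarding finitely many further points --- on each remaining subinterval, and the implicit function theorem then exhibits $f$ as implicitly $\F$-defined there. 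If $r<n$ the curve lies in the zero set $W$ of $\Psi$ with $\dim_\gamma W=n+1-r>1$, so $\Psi$ does not pin down $f$ and must be augmented: because the ring of terms is closed under differentiation, differentiating the relations $F_k(\gamma(z))\equiv 0$ along $\gamma$ yields fresh $\F$-term equations vanishing on $\gamma$, and one shows that adjoining finitely many of them raises the Jacobian rank in stages until it reaches the size of the (enlarged) tuple, reducing to the case already treated; an induction on the rank deficiency closes the argument.

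I expect the augmentation step to be the main obstacle: one has to verify that finitely many derivative-relations really do suffice to reach full rank, and that the nonsingular minor ultimately extracted still solves for $f_1=f$ and not merely for some auxiliary functions. This is the geometric and bookkeeping heart of Wilkie's original $\Rexp$ argument and of the Jones--Wilkie generalisation, and it is exactly here that closure of the ring of terms under differentiation is indispensable. The remaining ingredients --- the existential normal form, o-minimality, definable choice, and the closing use of the implicit function theorem --- should be routine.
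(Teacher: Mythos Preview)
The paper does not supply a proof of this theorem; it is stated with attribution to Jones and Wilkie and cited from \cite{jw}, so there is no proof in the paper against which your proposal can be compared.

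As a standalone sketch your outline follows the expected architecture: existential normal form from model completeness, flattening so that each $g\in\F$ is applied to a single variable, o-minimality together with definable choice to produce analytic witness functions along the graph, and then a Jacobian-rank analysis. You are right to identify the augmentation step as the crux. One caution about the way you describe it: differentiating the identities $F_k(\gamma(z))\equiv 0$ with respect to $z$ brings in the derivatives $f_j'(z)$, so the resulting relations are $\F$-term equations only after adjoining these derivatives as new auxiliary functions; this enlarges $n$ rather than closing the rank gap for the original $n\times n$ system, and one must then argue that the combined process of enlarging the tuple while gaining rank actually terminates with a square nonsingular system that still pins down $f_1=f$. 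That termination and bookkeeping is precisely the substantive content of the Jones--Wilkie argument, and your sketch does not yet indicate why it succeeds. Since the present paper treats the theorem as a black box, none of this is visible here in any case.
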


\subsection{An implicit definition following from a result of Gabrielov}
\par 
This implicit definition is obtained from a model completeness result of Gabrielov in \cite{gab}. As noted in the introduction, although the theorem of Gabrielov is well known, as far as I am aware this is the first application of this theorem in order to obtain an implicit definition of this kind. Firstly we state Gabrielov's theorem and give some background terminology from \cite{gab}. Then we state and prove the implicit definition.

\begin{theorem}[Gabrielov]\label{Gabrielov theorem}\label{theorem- Gabrielov }
	Let $Y$ be a $\Phi$-subanalytic subset of $[0,1]^n$. Then $\tilde{Y}=[0,1]^n\setminus Y$ is $\Phi$-subanalytic.	
\end{theorem}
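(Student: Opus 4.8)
The plan is to follow Gabrielov's argument from \cite{gab}; since this is a substantial theorem in subanalytic geometry, I describe only the architecture of the proof. In logical terms the statement is a model completeness result: it says that the class of $\Phi$-subanalytic sets --- those obtained from semi-$\Phi$-analytic sets by finitely many proper coordinate projections --- is already closed under complementation, hence under all finite Boolean operations, and hence that a bounded number of alternating projections and complements suffices to describe any such set. Accordingly the proof is an induction whose main parameters are the ambient dimension $n$ and the ``projection complexity'' of a chosen presentation of $Y$, and the complement statement is not proved in isolation: one runs a simultaneous induction that also yields closure of the class under $\cup$ and $\cap$, the fact that the closure $\overline{Z}$ and the frontier $\overline{Z}\setminus Z$ of a $\Phi$-subanalytic set $Z$ are $\Phi$-subanalytic, and the expected dimension bounds.

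The first reduction is to an atomic case. Write $Y=\pi(X)$ with $X\subseteq[0,1]^{n+k}$ semi-$\Phi$-analytic --- a locally finite Boolean combination of sets $\{f=0\}$ and $\{f>0\}$ in which each $f$ is built from the functions of $\Phi$ and the coordinates by the ring operations --- and $\pi\colon[0,1]^{n+k}\to[0,1]^n$ a coordinate projection, which is automatically proper since $[0,1]^{n+k}$ is compact. Factoring $\pi$ into $k$ one-variable projections and inducting on $k$, it suffices to show: if $X\subseteq[0,1]^{n+1}$ is semi-$\Phi$-analytic and $\pi$ forgets the last coordinate, then $[0,1]^n\setminus\pi(X)$ is $\Phi$-subanalytic. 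The case $k=0$ is the base of this induction and is trivial, since semi-$\Phi$-analytic sets are closed under complement by definition: the complements of $\{f=0\}$ and $\{f>0\}$ are again finite Boolean combinations of such sets.

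The heart of the argument is \emph{fiber cutting}. One replaces $X$ by a semi-$\Phi$-analytic subset $X'\subseteq X$ with $\pi(X')=\pi(X)$ but $\dim X'\le n$, so that $\pi|_{X'}$ is finite-to-one over a dense open subset of the base. Concretely, one stratifies $X$ into finitely many $\Phi$-analytic cells, and on any cell $C$ over which $\pi$ has positive-dimensional fibers one selects a lower-dimensional section by passing to the critical locus of a generic last-coordinate function along the fibers --- this amounts to imposing the vanishing of suitable partial derivatives, which is again expressible with $\Phi$-analytic functions, so $X'$ stays semi-$\Phi$-analytic; iterating removes all positive-dimensional fibers. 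Once $\pi|_{X'}$ is finite, $\overline{\pi(X')}=\pi(\overline{X'})$ (using properness of $\pi$), so $\overline{\pi(X')}\setminus\pi(X')$ has dimension $<n$ and, being assembled from closures and frontiers of lower-complexity $\Phi$-subanalytic sets, is $\Phi$-subanalytic by the induction hypothesis; decomposing $[0,1]^n$ into $\pi(X')$, this frontier part, and the remaining open part, and applying the dimension induction to the lower-dimensional pieces, one assembles $[0,1]^n\setminus\pi(X)$ as a $\Phi$-subanalytic set. The delicate point is the bookkeeping: every intermediate set must be kept inside the class, and closures, frontiers and dimensions must be controlled in lockstep with the main statement, which is exactly why the auxiliary statements are carried along in the same induction.

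I expect the main obstacle to be the \emph{uniform finiteness} input that makes the induction terminate: one must bound the number of connected components of the fibers that occur in the fiber-cutting step (and the number of cells produced by the stratifications) by a constant depending only on the complexity of the defining data and not on the base point. For semialgebraic data such bounds are classical, but for $\Phi$-subanalytic data they are precisely the hard analytic content; Gabrielov derives them from his estimates on the multiplicities of analytic functions restricted to analytic trajectories, by Rolle--Khovanskii-type arguments, and this is the one place where the specific nature of the functions in $\Phi$ is used in an essential way. Granted those bounds, the fiber-cutting-plus-dimension induction closes up and delivers that $\tilde Y=[0,1]^n\setminus Y$ is $\Phi$-subanalytic.
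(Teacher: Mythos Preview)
The paper does not prove this theorem at all: it is stated as a result of Gabrielov, attributed to \cite{gab}, and used as a black box (to obtain model completeness in Corollary~\ref{corollary-background-gabrielov- model completeness} and the implicit definition in Proposition~\ref{proposition- gab imp definition }). So there is no ``paper's own proof'' to compare against; your sketch is supplying what the paper deliberately omits.

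As a sketch of Gabrielov's argument, the architecture you describe---reduce to a single projection, fiber-cut to make the projection generically finite, then run a dimension induction carrying along closure/frontier/dimension statements---is indeed the shape of the proof in \cite{gab}. One caveat: your final paragraph attributes the uniform finiteness input to ``Rolle--Khovanskii-type arguments'' and multiplicity estimates along trajectories. That is the flavour of the Pfaffian/Noetherian story (Khovanskii, Wilkie), not of Gabrielov's result here, which applies to an \emph{arbitrary} family $\Phi$ of restricted real-analytic functions with no differential-algebraic hypotheses. In Gabrielov's paper the required finiteness and the control on the stratification come from purely analytic tools---Weierstrass preparation, Artin approximation, and careful bookkeeping of the semianalytic data---rather than from any fewnomial-type counting. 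If you want your sketch to match \cite{gab} faithfully, that paragraph should be rewritten; otherwise the outline is sound.
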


Consider a set of restricted real analytic functions $\Phi$ and a subanalytic set $Y$ defined from the functions in $\Phi$. Then by the previous theorem the complement of $Y$ is defined by functions in the algebra generated by the functions in $\Phi$, their partial derivatives, the constants 0 and 1 and the coordinate functions. In particular we have the following corollary.

\begin{corollary}[Gabrielov]\label{corollary-background-gabrielov- model completeness}
	Let $\F$ be an infinite collection of real analytic functions that are defined on a bounded closed interval in $\R$ that is closed under differentiation. Then the structure $(\Rbar,\F)$ is model complete.
\end{corollary}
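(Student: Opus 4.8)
The plan is to derive model completeness by reading Gabrielov's complement theorem (Theorem~\ref{Gabrielov theorem}) as a statement about the class of existentially definable sets of $(\Rbar,\F)$. First I would reduce to restricted analytic functions. Each $f\in\F$ is real analytic on some bounded closed interval $[a_f,b_f]$; precomposing with the affine bijection $[0,1]\to[a_f,b_f]$, which is quantifier-free definable in $\Rbar$, replaces $(\Rbar,\F)$ by a definitionally equivalent structure $(\Rbar,\Phi)$ in which every function of $\Phi$ is analytic on (a neighbourhood of) $[0,1]$. This replacement is given by explicit quantifier-free definitions in both directions, so it preserves the classes of definable and of existentially definable sets. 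Since $\F$, and hence $\Phi$, is closed under differentiation, the structure has a ring of terms closed under differentiation, so that ring already contains every partial derivative of a function of $\Phi$ occurring in the function algebra $\mathcal{A}_\Phi$ that underlies Gabrielov's notion of $\Phi$-(sub)analytic set. I would then check that, with the natural convention of restricting each function of $\Phi$ to its domain, the $\Phi$-semianalytic subsets of $[0,1]^m$ are exactly the quantifier-free $(\Rbar,\Phi)$-definable ones, and hence that the $\Phi$-subanalytic subsets of $[0,1]^m$, being images of $\Phi$-semianalytic sets under coordinate projections of cubes, are exactly the existentially $(\Rbar,\Phi)$-definable subsets of cubes.

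Next I would observe that the collection of $\Phi$-subanalytic sets is closed under finite unions, finite intersections, products, coordinate permutations and coordinate projections — everything lives inside unit cubes, so no boundedness obstruction to projection arises — and contains every basic semialgebraic subset of a cube together with the graphs of the functions of $\Phi$. Theorem~\ref{Gabrielov theorem} then supplies the missing closure property: the complement inside a cube of a $\Phi$-subanalytic set is again $\Phi$-subanalytic. A routine induction on the complexity of first-order formulas shows that every $(\Rbar,\Phi)$-definable subset of a cube is $\Phi$-subanalytic, hence, by the identification above, existentially definable.

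Finally I would transfer this from cubes to $\R^m$. The semialgebraic homeomorphism $\theta\colon\R\to(-1,1)$, $\theta(x)=x/\sqrt{1+x^{2}}$, and its inverse are definable in $\Rbar$ by existential formulas, so applying $\theta$ coordinatewise and rescaling affinely into $(0,1)^m\subseteq[0,1]^m$ sends an arbitrary definable $X\subseteq\R^m$ to a definable subset of the open cube, with $X$ existentially definable if and only if its image is. By the previous paragraph the image is existentially definable, hence so is $X$, and this is exactly model completeness. The one delicate step is the first paragraph: matching Gabrielov's geometric category of $\Phi$-subanalytic sets (as in \cite{gab}) with the syntactic class of existentially definable sets of $(\Rbar,\F)$. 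The hypothesis that $\F$ is closed under differentiation is precisely what makes this work, since Gabrielov's function algebra is forced to contain all partial derivatives of the generators and closure under differentiation guarantees that these cost nothing. The passage between $\R^m$ and the cube is routine bookkeeping, but it is needed because Theorem~\ref{Gabrielov theorem} is stated only for subsets of $[0,1]^n$.
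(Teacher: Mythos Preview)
Your proposal is correct and follows the same route the paper indicates: the paper simply remarks that Gabrielov's complement theorem yields the corollary because closure of $\F$ under differentiation ensures the function algebra appearing in the complement construction introduces nothing new, and you have spelled out in full the standard bookkeeping (identifying $\Phi$-subanalytic with existentially definable, the induction on formulas, and the transfer between cubes and $\R^m$) that the paper leaves implicit.
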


The following lemma is Lemma 3 in \cite{gab} and is required for the proof of the implicit definition.

\begin{lemma}\label{lemma- gabrielov-lemma 3}
	Let $X$ be a $\Phi$-semianalytic set in $[0,1]^{m+n}$, and let $Y=\pi X\subseteq [0,1]^n,d=\dim Y$. Then there exist finitely many $\Phi$-semianalytic subsets $X_v^\prime$ and a $\Phi$-subanalytic subset $V$ of $X$ such that $Y=\left(\pi V\right)\cup\bigcup_v \pi X_v^\prime$ and 
	
	\begin{enumerate}
		\item $X_v^\prime$ is effectively non-singular, $\dim X_v^\prime=d$ and $\pi:X_v^\prime\rightarrow Y$ has rank $d$ at every point of $X_v^\prime$ for each $v$.
		\item $\dim \pi V<d$ 
		\item $X_u^\prime\cap X_v^\prime=\emptyset,$ for $u\ne v$.
	\end{enumerate}
\end{lemma}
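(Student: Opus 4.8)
This is Lemma~3 of \cite{gab}, so in place of a full proof I describe the strategy one would follow. The goal is to reduce the coordinate projection $\pi$, up to a lower-dimensional exceptional set $V$, to a disjoint finite family of submersions onto their images from pieces of dimension exactly $d$. First I would stratify $X$ into finitely many connected $\Phi$-semianalytic submanifolds. This is available because $\Phi$-semianalytic sets form a Boolean algebra and, given a $\Phi$-function $\phi$ defining (part of) $X$, one can peel off the locus where a chosen partial derivative of $\phi$ --- again a $\Phi$-function, as the class is effectively closed under differentiation --- is nonzero, on which $\{\phi=0\}$ is a submanifold by the implicit function theorem; iterating over the finitely many defining functions and the coordinates produces the stratification, and the ``effective'' data is just the record of which $\Phi$-functions and Jacobian minors witness smoothness at each stage.

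Next, on each stratum $S$ the rank of $\pi|_S$ --- which, since $\pi$ is linear, is computed from the Jacobian of the functions cutting out $S$ --- is lower semicontinuous, so it is maximal on a dense open $\Phi$-semianalytic subset; discarding the smaller-rank locus (itself $\Phi$-semianalytic, being a common zero set of minors) and re-stratifying, I may assume $\pi$ has constant rank on each piece. Because $\pi(S)\subseteq Y$ and $\dim Y=d$, that rank is at most $d$. I throw into $V$ every stratum on which the rank is $<d$; the image of each such stratum has dimension $<d$, so this part of $V$ is harmless. On each remaining stratum $S$ the map $\pi|_S\colon S\to\R^n$ has constant rank $d$, hence locally looks like a linear projection with $(\dim S-d)$-dimensional fibres, and its image $\pi(S)$ has dimension $d$.

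If $\dim S=d$ I keep $S$ as one of the $X_v'$: then $\pi|_S$ is a local diffeomorphism onto its image, hence of rank $d$ everywhere, and $S$ is effectively non-singular by construction. If $\dim S>d$ I intersect $S$ with generic affine subspaces $L$ of $\R^{m+n}$ of dimension $m+n-(\dim S-d)$; each $S\cap L$ is $\Phi$-semianalytic --- the extra conditions being semialgebraic --- and, by a transversality/Sard argument applied inside this semialgebraic family of slices, for generic $L$ the set $S\cap L$ is, off a set of smaller dimension, a $d$-dimensional $\Phi$-semianalytic submanifold on which $\pi$ still has rank $d$. Finitely many such slices cover $\pi(S)$ up to a subset of dimension $<d$, which I absorb into $V$. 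I then make the finitely many $d$-dimensional pieces pairwise disjoint by subtracting (harmless up to a lower-dimensional residue, again pushed into $V$), and I let $V$ be the union of the discarded strata and the preimage in $X$ of the accumulated dimension-$<d$ subset of $Y$; this $V$ is $\Phi$-subanalytic, it satisfies $\dim\pi V<d$, and $Y=\pi V\cup\bigcup_v\pi X_v'$ by construction, with the $X_v'$ disjoint, $d$-dimensional, effectively non-singular, and of $\pi$-rank $d$.

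The hard part is to run this entire construction inside the $\Phi$-semianalytic/$\Phi$-subanalytic category while maintaining the ``effective'' bookkeeping --- always exhibiting a finite list of $\Phi$-functions and their derivatives that witnesses smoothness and the rank-$d$ condition --- and, relatedly, bounding the number of pieces produced by the stratification and by the slicing step so that the procedure terminates after finitely many steps. Securing that finiteness is the real content, and it is exactly what \cite{gab} provides; given it, the construction above closes the argument.
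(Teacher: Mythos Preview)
The paper does not prove this lemma at all; it simply quotes it as Lemma~3 of \cite{gab} and uses it as a black box in the proof of Proposition~\ref{proposition- gab imp definition }. Your proposal therefore goes beyond what the paper does, by sketching Gabrielov's argument rather than merely citing it.

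The outline you give---stratify $X$ into effectively non-singular $\Phi$-semianalytic pieces, refine so that $\pi$ has constant rank on each stratum, push the low-rank strata into $V$, and then cut the higher-dimensional rank-$d$ strata down to dimension $d$---has the right shape, and you correctly identify that the substantive content is keeping every step effective within the $\Phi$-class together with the finiteness bounds. One point to flag: in Gabrielov's actual proof the reduction from $\dim S>d$ to $\dim S=d$ is carried out by a fibre-cutting lemma (his Lemma~2), which selects in a $\Phi$-semianalytic way finitely many points from each fibre of $\pi|_S$, rather than by intersecting with generic affine subspaces as you suggest. Your affine-slice variant is plausible in spirit but would require extra work to show that the slices remain $\Phi$-semianalytic with the required effective witnesses and that finitely many of them cover $\pi(S)$ up to a set of lower dimension; Gabrielov's fibre-cutting handles both issues directly. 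Since the paper itself defers entirely to \cite{gab}, your explicit deferral at the end is fully consistent with the paper's treatment.
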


Now we shall state and prove the implicit definition that arises from Gabrielov's theorem.

\begin{proposition}\label{proposition- gab imp definition }
	Let $\F$ be a set of real analytic functions defined on a neighbourhood in $[0,1]$ that contains a closed interval $I$, suppose that $\F$ is closed under differentiation and consider the structure $(\Rbar,\F|_I)$, where $\F|_I\coloneqq\{ g|_I:g\in\F \}$. Let $f:U\rightarrow I^k$ where $U\subseteq I^m$ for some $m,k\ge1$ be a function definable in $(\Rbar,\F)$ and let $f_1,\dots,f_k:U\rightarrow I$ denote its coordinate functions.

	\par
	
	Then there exist integers $n,l \ge 1$, polynomials $P_1,\dots,P_n$ in $\R[y_1,\dots,y_{(l+1)(m+n)}]$, functions $f_{k+1},\dots,f_n:B\rightarrow I$ for an open box $B\subseteq U$ and $g_1,\dots,g_l\in \F$ such that for all $\zbar=(z_1,\dots,z_m)\in B$,
	
	$$\begin{array}{ccc}
	F_1(\zbar,f_1(\zbar),\dots,f_n(\zbar))=0\\\vdots\\F_{n}(\zbar,f_1(\zbar),\dots,f_n(\zbar))=0
	\end{array}$$ and $$\det\left(\frac{\partial F_i}{\partial x_j}\right)_{\substack{i=1,\dots,n\\j=m+1,\dots,m+n}}(\zbar,f_1(\zbar),\dots,f_n(\zbar))\ne0,$$
	\noindent
	where
	
	\begin{align*}F_i(\zbar,f_1(\zbar),\dots,f_n(\zbar))=P_i(&\zbar,f_1(\zbar),\dots,f_n(\zbar),\\&g_1(z_1),\dots,g_1(z_m),g_1(f_1(\zbar)),\dots,g_1(f_n(\zbar)),
	\dots,\\& g_l(z_1),\dots,g_l(z_m),g_l(f_1(\zbar)),\dots,g_l(f_n(\zbar))).
	\end{align*} 
\end{proposition}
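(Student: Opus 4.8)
The plan is to extract the implicit definition from Gabrielov's model completeness result (Corollary \ref{corollary-background-gabrielov- model completeness}) together with the structure-theoretic Lemma \ref{lemma- gabrielov-lemma 3}. First I would apply an affine change of coordinates so that the graph of $f$ lies inside a cube $[0,1]^{m+k}$, and replace $\F$ by the finite subfamily $g_1,\dots,g_l$ consisting of the functions actually appearing (together with whatever finitely many of their derivatives Lemma \ref{lemma- gabrielov-lemma 3} forces us to include, using that $\F$ is closed under differentiation so that these derivatives are again polynomials in members of $\F$). Since $(\Rbar,\F|_I)$ is model complete, the graph $\Gamma_f = \{(\zbar,\wbar) : \wbar = f(\zbar)\}$ is $\Phi$-subanalytic, i.e.\ a projection $\pi X$ of a $\Phi$-semianalytic set $X\subseteq [0,1]^{M+m+k}$ cut out by finitely many equalities and inequalities in the $g_i$, their partials, the coordinates and the constants $0,1$.

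Next I would apply Lemma \ref{lemma- gabrielov-lemma 3} to this $X$ with $Y=\Gamma_f$. Because $f$ is a function, $\dim Y = m$, and the set $\pi V$ has dimension strictly less than $m$, so it cannot contain any open box; hence there is an open box $B\subseteq U$ such that the portion of $\Gamma_f$ above $B$ is covered by the pieces $\pi X'_v$, and by shrinking $B$ further (the $X'_v$ are finitely many and disjoint, and their images are relatively open in $Y$ of full dimension $m$) we may assume the whole graph above $B$ comes from a single effectively nonsingular piece $X'_v$ on which $\pi$ has rank $m$ at every point. Effective nonsingularity means $X'_v$ is locally the common zero set of some functions $F_1,\dots,F_n$ (built as polynomials $P_i$ in $\zbar$, the fibre coordinates, and the $g_i$ evaluated at these, exactly as in the displayed formula for $F_i$) with a nonvanishing Jacobian minor; the rank-$m$ condition on $\pi$ is precisely what lets us choose the $n$ fibre coordinates $f_1,\dots,f_k,f_{k+1},\dots,f_n$ as the dependent variables and solve for them by the implicit function theorem over $B$, which forces the minor $\det(\partial F_i/\partial x_j)_{i=1,\dots,n,\; j=m+1,\dots,m+n}$ to be nonzero along the graph. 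Reading off $P_1,\dots,P_n$, $g_1,\dots,g_l$ and $f_{k+1},\dots,f_n$ from this picture gives exactly the asserted implicit definition.

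I would be careful about two bookkeeping points. First, the functions whose graph we are desingularising should be the $g_i$ restricted to $I$ (or rather to $[0,1]$ after rescaling), and one must check that composing $g_i$ with the coordinate projections and with $f_j$ lands in the domain of $g_i$; this is why the hypothesis asks that $\F$ be defined on a neighbourhood of $I$ and that $f$ take values in $I$, so that $g_i(f_j(\zbar))$ makes sense. Second, Lemma \ref{lemma- gabrielov-lemma 3} as stated produces finitely many pieces $X'_v$; turning ``covered by the $\pi X'_v$'' into ``equal to a single $\pi X'_v$ over a box'' uses that $Y\setminus \pi V$ is a finite union of relatively open full-dimensional pieces, so at least one of them, hence the box inside it, is covered by a single piece — this is the place where I would spell out the shrinking of $B$ most carefully.

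The main obstacle I expect is this last reduction to a single piece together with the passage from ``effectively nonsingular with $\pi$ of rank $m$'' to an honest system $F_1=\dots=F_n=0$ with the specified nonvanishing minor in the variables $x_{m+1},\dots,x_{m+n}$: one has to match Gabrielov's notion of effective nonsingularity (a defining system for $X'_v$ inside its ambient cube with some maximal-rank Jacobian) with the particular arrangement of dependent vs.\ independent variables demanded in the statement, and verify that the rank condition on the projection is exactly what guarantees solvability for $f_1,\dots,f_n$ as implicit functions of $\zbar$ on $B$. Everything else — the affine normalisation, invoking model completeness, and recording that derivatives of the $g_i$ are polynomials in finitely many members of $\F$ — is routine.
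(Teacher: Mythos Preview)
Your proposal is correct and follows essentially the same route as the paper: take the graph $Y=\Gamma(f)$, use model completeness (Corollary~\ref{corollary-background-gabrielov- model completeness}) to see it is $\F$-subanalytic, apply Lemma~\ref{lemma- gabrielov-lemma 3} to replace $Y$ by $(\pi V)\cup\bigcup_v\pi X_v'$, discard the small piece $\pi V$, pass to a single effectively nonsingular $X_v'$ over an open box $B$, and read off the nonsingular system from Gabrielov's definition of effective nonsingularity together with the rank condition on $\pi$. The paper's proof is terser than yours---in particular it does not spell out the reduction to a single $X_v'$ or the matching of the rank condition to the specific Jacobian minor that you flag as the main obstacle---but the argument is the same.
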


\begin{proof}
	
	Here the functions in $\F$ are defined on a neighbourhood in $[0,1]$ rather than a neighbourhood containing $[0,1]$. This has a slight impact on the definitions and results of Gabrielov that we wish to apply, namely that the interval $I\subseteq[0,1]$ takes the place of $[0,1]$ in the above statements. Let $Y=\Gamma(f)\subseteq \R^{m+1}$ be the graph of $f$. Clearly $\dim Y=m$. Then $Y$ is a definable set in the structure $(\Rbar,\F)$ and by Corollary \ref{corollary-background-gabrielov- model completeness} the set $Y$ is a $\F$-subanalytic set of dimension $m$. By definition $Y=\pi X$ where $X$ is a $\F$-semianalytic subset of $\R^{m+n}$ for some $n$. By Lemma \ref{lemma- gabrielov-lemma 3} we have that $Y=\left(\pi V\right)\cup\bigcup \pi X_v'$ where $X_v'$ are effectively non-singular $\F$-semianalytic sets of dimension $m$ and $\pi V$ is small. It is enough to prove the result for $Y=\pi X_v'$ for a single effectively non-singular set $X_v'$. By the definition of an effectively non-singular set and the rank condition seen in Definition 3 in \cite{gab} the function $f$ may be defined by a non-singular system of $m+n-m$ equations as described in the statement.
\end{proof}

\section{Proof of Theorem \ref{theorem- undef for P theorem}}\label{section-proof of theorem}
The proof of Theorem \ref{theorem- undef for P theorem} consists of three cases. Namely, when the lattice $\Omega$ is closed under complex conjugation (a real lattice), when it is isogenous to its conjugate and when it is not. The method for each of these cases is essentially the same and here we give the proof in the case when $\Omega$ is a real lattice. The differences between the proof of the real lattice case and the other two cases are explained at the end of this section. 
\par 
Assume that $\Omega$ is a real lattice. Then the restriction $\wp|_I$ is a real valued function, this can be seen in Section 18 of \cite{duval}. From the differential equation it is clear that the structures $(\Rbar,\wp|_I)$ and $(\Rbar,\wp|_I,\wp'|_I)$ are the same in the sense of having the same definable sets and it therefore suffices to prove the theorem using the structure $(\Rbar,\wp|_I,\wp'|_I)$. By Gabrielov's theorem, Theorem \ref{Gabrielov theorem}, this structure is model complete. Model completeness results involving the $\wp$-function are also due to Bianconi in \cite{bianconimc}. However these results deal with complex functions rather than their restrictions to a real interval and therefore do not seem applicable here.
\par 
If $n>1$ then we can fix all the variables except one and apply the $n=1$ case for each variable in turn. Therefore each coordinate function of $f$ is semialgebraic and holomorphic and so $f$ is an algebraic function in each variable and by Theorem 2 in \cite{sharipov-sukhov} the function $f$ is itself algebraic and therefore definable in $\Rbar$. Hence we may assume that $n=1.$
\par 
Assume for a contradiction that $v$ is not definable in $\Rbar$. The proof of the following claim is a straightforward application of the identities for the real and imaginary parts of a complex function and so we simply state this claim. This corresponds to Claim 1 in the proof of Theorem 4 in \cite{bianconiundef}.

\begin{claim}\label{claim-undefclaim1}
	The function $u(x,y)$ is not definable in $\Rbar$. In fact the functions $x,y,u(x,y),\\v(x,y)$ are algebraically independent over $\R$.
\end{claim}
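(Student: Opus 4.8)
The plan is to prove that $f$ is non-constant, then feed the hypothesis "$v$ is not definable in $\Rbar$" into the real/imaginary part identities to extract algebraic independence. First I would observe that if $f$ were constant, or more generally if $f$ had algebraic real and imaginary parts, then in particular $v$ would be semialgebraic and hence definable in $\Rbar$, contrary to assumption; so $f$ is genuinely transcendental and at least one of $u,v$ is not semialgebraic. The real content of the claim is upgrading "one of $u,v$ is not definable in $\Rbar$" to "both are not definable, and moreover $x,y,u,v$ are algebraically independent over $\R$."

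The key step is a Cauchy--Riemann argument. Since $f=u+iv$ is holomorphic on $D$, we have $u_x=v_y$ and $u_y=-v_x$, so knowing $v$ (together with the semialgebraic coordinate functions) determines $u$ up to an additive real constant, and conversely; hence $u$ is definable in $\Rbar$ if and only if $v$ is, and since $v$ is not, neither is $u$. For algebraic independence: suppose $P(x,y,u,v)=0$ identically on $D$ for some nonzero $P\in\R[X_1,X_2,X_3,X_4]$, chosen of minimal degree in the last two variables. Differentiating this relation with respect to $x$ and with respect to $y$, substituting $u_x=v_y$, $u_y=-v_x$, and eliminating the partials $v_x,v_y$ (which are not both identically zero, since $f$ is non-constant), one obtains a new polynomial relation among $x,y,u,v$ of lower degree in $(u,v)$; a descent then forces a relation expressing $u$ algebraically over $\R(x,y)$ or $v$ algebraically over $\R(x,y)$, which would make $v$ semialgebraic — contradiction. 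This is precisely the standard manipulation behind Claim 1 in \cite{bianconiundef}, transcribed to the $\wp$-setting, which is why the paper states it without proof.

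The main obstacle, such as it is, is bookkeeping in the elimination/descent: one must be careful that the polynomial produced by differentiating and eliminating is genuinely nonzero and genuinely of strictly smaller degree in $(u,v)$, handling separately the degenerate cases where $P$ depends on only one of $u$ or $v$, or where the Jacobian-type determinant that appears in the elimination vanishes identically (the latter being exactly the case $f' \equiv 0$, already excluded). Once those degeneracies are dispatched, the argument is purely formal and uses nothing about $\wp$ beyond the fact that $v$, being definable in $(\Rbar,\wp|_I)$ but assumed not definable in $\Rbar$, cannot be semialgebraic. I would therefore present the proof as: (i) non-constancy of $f$; (ii) the Cauchy--Riemann equivalence giving non-definability of $u$; (iii) the differentiation-and-descent argument giving algebraic independence of $x,y,u,v$ over $\R$.
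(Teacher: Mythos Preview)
Your overall plan matches the paper's: the paper omits the proof entirely, saying only that it is ``a straightforward application of the identities for the real and imaginary parts of a complex function'' and that it corresponds to Claim~1 in \cite{bianconiundef}. Your sketch is a reasonable reconstruction, but two of the steps need adjusting.

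In step~(ii), the Cauchy--Riemann relations do determine $u$ from $v$ up to a constant, but this does \emph{not} transfer definability in~$\Rbar$: antiderivatives of semialgebraic functions need not be semialgebraic (think of $\arctan$), so ``$u$ is definable in $\Rbar$ iff $v$ is'' does not follow from that observation. This is harmless, since the non-definability of $u$ is an immediate consequence of the algebraic independence in~(iii): a semialgebraic real-analytic $u$ would be Nash and hence satisfy a nontrivial $P(x,y,u)=0$.

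In step~(iii), differentiating $P(x,y,u,v)=0$ and using Cauchy--Riemann lets you \emph{solve} for $v_x,v_y$ as rational functions of $x,y,u,v$, but it does not by itself produce a new polynomial relation of lower degree among $x,y,u,v$; the descent as you describe it does not obviously close. What the paper's phrase ``identities for the real and imaginary parts'' actually points to is the substitution $x=(z+\bar z)/2$, $y=(z-\bar z)/(2i)$, $u=(f+\bar f)/2$, $v=(f-\bar f)/(2i)$, which turns $P(x,y,u,v)=0$ into a polynomial relation $Q(z,\bar z,f(z),\overline{f(z)})=0$. Polarizing (treating $z$ and $\bar z$ as independent complex variables, which is legitimate since the identity is real-analytic) and then specializing $\bar z$ yields a nontrivial algebraic relation between $z$ and $f(z)$ alone; hence $f$ is algebraic and $v$ semialgebraic, the desired contradiction. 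This is the route you should present for~(iii), and with it step~(ii) becomes a one-line corollary.
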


By applying the addition formula for $\wp$ we may translate and shrink the interval $I$ and assume that $I\subseteq [0,1]$. Similarly we may replace $D$ with a smaller disc and assume that $D\subseteq I^2\subseteq[0,1]^2$. If $f$ is algebraic on this smaller disc it will be algebraic on the original disc and it therefore suffices to prove the theorem on the smaller disc. The images of $u$ and $v$ restricted to this disc will be bounded and by a final translating and scaling we may suppose that these images are contained in the interval $I$. 
\par
Let $f_2(x,y)=u(x,y)$ and $f_3(x,y)=v(x,y)$. By Proposition \ref{proposition- gab imp definition }, for some integer $n\ge1$ and an open box $B\subseteq D$ there are polynomials $P_2,\dots,P_n\in\R[y_0,\dots,y_{3n+2}]$ and non-zero rationals $a_0,\dots,a_n$, certain functions $f_4,\dots,f_n:B\rightarrow I$, such that for all $(x,y)\in B$,

$$\begin{array}{ccc}
F_2(x,y,f_2(x,y),\dots,f_n(x,y))=0\\\vdots\\F_n(x,y,f_2(x,y),\dots,f_n(x,y))=0
\end{array}$$	
and
$$\det\left(\frac{\partial F_i}{\partial x_j}\right)_{\substack{i=2,\dots,n\\j=2,\dots,n}}(x,y,f_2(x,y),\dots,f_n(x,y))\ne 0,$$
where for $i=2,\dots,n$ we have that 
$$
F_i(x_0,\dots,x_n)=P_i(x_0,\dots,x_n,\wp(a_0x_0),\dots,\wp(a_nx_n),\wp'(a_0x_0),\dots,\wp'(a_nx_n)).
$$
Therefore for all $i,j=2,\dots,n$

\begin{equation}\label{equation- differentiate F_i}\frac{\partial F_i}{\partial x_j}(x_0,\dots,x_n)=\frac{\partial P_i}{\partial y_j}(\ybar)+a_j\wp'(a_jy_j)\frac{\partial P_i}{\partial y_{j+n+1}}(\ybar)+a_j\wp''(a_jy_j)\frac{\partial P_i}{\partial y_{j+2n+2}}(\ybar),   \end{equation}
where 

$$\ybar=(x_0,\dots,x_n,\wp(a_0x_0),\dots,\wp(a_nx_n),\wp'(a_0x_0),\dots,\wp(a_nx_n)).$$

Let $f_0(x,y)=x$ and $f_1(x,y)=y$. Now $n$ is taken to be minimal such that there exists an open box $B$, some non-zero rationals $a_0,\dots,a_n$ and polynomials $P_2,\dots,P_n$ in $3n+3$ variables and $F_i(x_0,\dots,x_n)=P_i(x_0,\dots,x_n,\wp(a_0x_0),\dots,\wp(a_nx_n),\wp'(a_0x_0),\dots,\wp'(a_nx_n))$ and there are also some functions $f_4,\dots,f_n$ whose domain is $B$ such that $F_i(f_0(x,y),\dots,f_n(x,y))=0$ and $\det(\partial F_i/\partial x_j)(f_0(x,y),\dots,f_n(x,y))\ne 0$ for all $(x,y)\in B$. The functions $f_0,\dots,f_n$ are real analytic on a disc $D'\subseteq B$ centred at some $\alpha=(\alpha_1,\alpha_2)\in B$. It can easily be shown that $f_0-f_0(\alpha),\dots,f_n-f_n(\alpha)$ are linearly independent over $\Q$. Applying Theorem \ref{theorem-ax-schanuel for P-analytic functions on disc- multiple P functions} to $a_0f_0,\dots,a_nf_n$ gives that

$$\trdeg_\C\C[f_0,\dots,f_n,\wp(a_0f_0),\dots,\wp(a_nf_n)]\ge n+2.$$

The rest of the proof consists of finding a contradictory upper bound on this transcendence degree. Let $$\xtilde=\xtilde(x,y)=(f_0(x,y),\dots,f_n(x,y))$$ and \begin{align*}
\ytilde=\ytilde(x,y)=(&f_0(x,y),\dots,f_n(x,y),\wp(a_0f_0(x,y)),\dots,\wp(a_nf_n(x,y)),\\&\wp'(a_0f_0(x,y)),\dots,\wp'(a_nf_n(x,y)))
\end{align*}
for all $(x,y)\in B$. From \eqref{equation- differentiate F_i} it is clear that for all $(x,y)\in B$

$$	\begin{pmatrix}
\frac{\partial F_2}{\partial x_2}&\dots&\frac{\partial F_2}{\partial x_n}\\\vdots&\ddots&\vdots\\\frac{\partial F_n}{\partial x_2}&\dots&\frac{\partial f_n}{\partial x_n}
\end{pmatrix}(\xtilde(x,y))=\begin{pmatrix}
\frac{\partial P_2}{\partial y_2}&\dots&\frac{\partial P_2}{\partial y_{3n+2}}\\\vdots&\ddots&\vdots\\\frac{\partial P_n}{\partial y_2}&\dots&\frac{\partial P_n}{\partial y_{3n+2}}
\end{pmatrix}(\ytilde(x,y))\cdot M,
$$
where $M$ is the $(3n+1)\times(n-1)$ matrix

$$M=\begin{pmatrix}
&0&0&&0&0&\\I_{n-1}&\vdots&\vdots& M_1&\vdots&\vdots& M_2\\&0&0&&0&0&
\end{pmatrix}^T$$ where 

$$M_1=\begin{pmatrix}
a_2\wp'(a_2f_2(x,y))&\dots&0\\\vdots&\ddots&\vdots\\0&\dots&a_n\wp'(a_nf_n(x,y))
\end{pmatrix}$$ and 

$$M_2=\begin{pmatrix}
a_2\wp''(a_2f_2(x,y))&\dots&0\\\vdots&\ddots&\vdots\\0&\dots&a_n\wp''(a_nf_n(x,y))
\end{pmatrix}.$$
The rows of 

$$	\begin{pmatrix}
\partial F_2/\partial x_2&\dots&\partial F_2/\partial x_n\\\vdots&\ddots&\vdots\\\partial F_n/\partial x_2&\dots&\partial F_n/\partial x_n
\end{pmatrix}(\xtilde(x,y))$$ are linearly independent over $\R$ and so the rows of 

$$\begin{pmatrix}
\partial P_2/\partial y_2&\dots&\partial P_2/\partial y_{3n+2}\\\vdots&\ddots&\vdots\\\partial P_n/\partial y_2&\dots&\partial P_n/\partial y_{3n+2}
\end{pmatrix}(\ytilde(x,y))$$
are also linearly independent over $\R$. Therefore for all $(x,y)\in B$ the matrix 

$$\begin{pmatrix}
\partial P_2/\partial y_2&\dots&\partial P_2/\partial y_{3n+2}\\\vdots&\ddots&\vdots\\\partial P_n/\partial y_2&\dots&\partial P_n/\partial y_{3n+2}
\end{pmatrix}(\ytilde(x,y))$$
 has maximal rank $n-1$. Given Proposition 5.3 in Chapter 8 of \cite{langalg} it follows by a standard argument that

$$\trdeg_\C \C[f_0,\dots,f_n,\wp(a_0f_0),\dots,\wp(a_nf_n)]\le 2n+4.$$
 In order to obtain the desired contradictory upper bound $n+3$ polynomial equations shall be added to the system and it shall be shown how this lowers the upper bound on transcendence degree. The first $n+1$ of these equations correspond to the differential equation for the $\wp$-function in each of the $n+1$ variables and the final two of these equations arises from the Cauchy-Riemann equations for the functions $u$ and $v$. For each $i=0,\dots,n$ define $$P_{i+n+1}(y_{i+n+1},y_{i+2n+2})=y_{i+2n+2}^2-4y_{i+n+1}^3+g_2y_{i+n+1}+g_3.$$ For all $(x,y)\in B$ and $i=0,\dots,n$ $$P_{i+n+1}(\wp(a_if_i(x,y)),\wp'(a_if_i(x,y)))=0.$$
By differentiating and using \eqref{equation- P'' formula} it can be shown that for all $i=0,\dots,n$ and $(x,y)\in B$,
\begin{align*}
\frac{\partial P_{i+n+1}}{\partial y_j}(y_{j+n+1},y_{j+2n+2})&+a_j\wp'(a_jf_j(x,y))\frac{\partial P_{i+n+1}}{\partial y_{j+n+1}}(y_{j+n+1},y_{j+2n+2})\\&+a_j\wp''(a_jf_j(x,y))\frac{\partial P_{i+n+1}}{\partial y_{j+2n+2}}(y_{j+n+1},y_{j+2n+2})=0.
\end{align*}
It can then easily be shown that the matrix

 $$\begin{pmatrix}
 	\partial P_2/\partial y_2&\dots&\partial P_2/\partial y_{3n+2}\\\vdots&\ddots&\vdots\\\partial P_{2n+1}/\partial y_2&\dots&\partial P_{2n+1}/\partial y_{3n+2}
 \end{pmatrix}(\ytilde(x,y))$$
 has maximal rank $2n$ and therefore by the same standard argument we have that 

$$\trdeg_\C \C[f_0,\dots,f_n,\wp(a_0f_0),\dots,\wp(a_nf_n),\wp'(a_0f_0),\dots,\wp'(a_nf_n)]\le n+3.$$

By the implicit function theorem the derivatives of $f_i(x_0,x_1)$ for $i=2,\dots,n$ are given by 

$$\begin{pmatrix}
\frac{\partial f_2}{\partial x_k}\\\vdots\\\frac{\partial f_n}{\partial x_k}
\end{pmatrix}=-\Delta^{-1}\begin{pmatrix}
\frac{\partial F_2}{\partial x_k}\\\vdots\\\frac{\partial F_n}{\partial x_k}
\end{pmatrix},$$
where $k=0,1$ and $\Delta=(\partial F_i/\partial x_j)$ and the right hand side is evaluated at $(x_0,\dots,x_n)=(f_0,\dots,f_n)$. Multiplying both sides by the determinant of $\Delta$ and using the Cauchy-Riemann equations for $f_2$ and $f_3$ gives two new equations $F_0$ and $F_1$ with corresponding polynomials $P_0$ and $P_1$, following the method of Bianconi in \cite{bianconiundef}. These are of the form,

\begin{align*}
F_0=[& \text{first line of }-\det\Delta\cdot(\Delta^{-1}(\partial F_i/\partial x_0))\\& \text{ minus the second line of }-\det\Delta\cdot(\Delta^{-1}(\partial F_i/\partial x_1))]
\end{align*}
and 
\begin{align*}
F_1=[& \text{first line of }-\det\Delta\cdot(\Delta^{-1}(\partial F_i/\partial x_1))\\& \text{ plus the second line of }-\det\Delta\cdot(\Delta^{-1}(\partial F_i/\partial x_0))].
\end{align*}
In order to lower the upper bound further we have the following lemma, the proof of which adapts those of Claims 5 and 6 in the proof of Theorem 4 in \cite{bianconiundef}.

\begin{lemma}\label{lemma- Undef- perturbation lemma}
	For each $k=0,1$ there is a point $z\in \C^{3n+3}$ such that $P_k(z)\ne0$ and $P_{1-k}(z)=0$ and $P_i(z)=0$ for all $i=2,\dots,2n+1$.
\end{lemma}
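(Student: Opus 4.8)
The plan is to argue by contradiction at the level of the polynomials $P_0,P_1$, exploiting the minimality of $n$. Suppose the conclusion of the lemma fails for $k=0$ (the case $k=1$ being symmetric). Then every point $z\in\C^{3n+3}$ at which $P_i(z)=0$ for all $i=1,\dots,2n+1$ also satisfies $P_0(z)=0$; in other words $P_0$ vanishes on the common zero set of $P_1,\dots,P_{2n+1}$. First I would recall that the system $F_1=\dots=F_{2n+1}=0$ is nonsingular along the graph of $(f_0,\dots,f_n)$, so that near the relevant point the variety cut out by $P_1,\dots,P_{2n+1}$ is a smooth branch through $\ytilde(\alpha)$ on which the $\wp$- and $\wp'$-coordinates are genuinely functions of the first $n+1$ coordinates. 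Feeding $P_0$ being identically zero there back into the implicit system, one finds that $F_0,F_2,\dots,F_{2n+1}$ together with the two Cauchy--Riemann equations still define $f_2,\dots,f_n$ (and hence $f_0,f_1$ via the Cauchy--Riemann combination) but now using the smaller tuple of $\wp$-data, contradicting the minimality of $n$.

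More concretely, I would track exactly how $P_0$ and $P_1$ were built: they are obtained from $-\det\Delta\cdot\Delta^{-1}(\partial F_i/\partial x_k)$ for $k=0,1$ by taking the Cauchy--Riemann combinations (first line of the $k=0$ expression minus second line of the $k=1$ expression, and vice versa). The point is that $\partial F_i/\partial x_k$ for $i=2,\dots,n$ are, by \eqref{equation- differentiate F_i}, polynomials in the $3n+3$ coordinates with $\wp''$ eliminated via \eqref{equation- P'' formula}, so $P_0,P_1\in\R[y_0,\dots,y_{3n+2}]$ genuinely. If $P_0$ vanished on $V(P_1,\dots,P_{2n+1})$, then at a generic smooth point of that variety the Jacobian criterion would force $dP_0$ to lie in the span of $dP_1,\dots,dP_{2n+1}$; pulling this back along $t\mapsto\ytilde(x(t),y(t))$ for curves in $B$ and using that $\ytilde$ parametrises an $n+1$-dimensional piece (by Theorem \ref{theorem-ax-schanuel for P-analytic functions on disc- multiple P functions}, which gave $\trdeg\ge n+2$), one sees the $F_0=0$ relation is a consequence of the others along the graph, so it can be dropped and the resulting system still defines $f$ implicitly. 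But $F_0,F_1$ are precisely the equations encoding $f_2,f_3$ as $u,v$ with the Cauchy--Riemann equations; dropping $F_0$ while keeping $F_1$ lets one re-parametrise and realise $f$ with strictly fewer than $n+1$ $\wp$-variables active, i.e.\ after eliminating one variable one gets a valid implicit $\F$-definition with parameter count below the minimal $n$ — the desired contradiction.

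The main obstacle I anticipate is making the last step — ``dropping $F_0$ and reducing the number of variables'' — fully rigorous, since a priori the variables $a_0f_0,\dots,a_nf_n$ are entangled and removing one equation does not obviously remove one $\wp$-argument. The clean way around this is the route Bianconi takes for Claims~5 and~6 in \cite{bianconiundef}: instead of trying to literally eliminate a variable, perturb. That is, use the failure of the lemma to show that $P_0$ (hence $F_0$) is in the ideal generated by $P_1,\dots,P_{2n+1}$ after localising, write $F_0=\sum_{i\ge 1}Q_iF_i$ on a neighbourhood of the graph with analytic (indeed algebraic-in-the-data) $Q_i$, and observe that then the derivative identities for $f_2,\dots,f_n$ coming from $F_0$ are already forced by those coming from $F_1,\dots,F_n$ — so one of the original $F_i$, $i=2,\dots,n$, together with the differential equations and the single Cauchy--Riemann equation $F_1$, suffices, and dropping the redundant $F_i$ drops the pair $(\wp(a_if_i),\wp'(a_if_i))$ from the picture and hence reduces $n$. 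I would structure the write-up so that the genericity/smoothness input (nonsingularity of $\Delta$ and of the enlarged $2n\times(3n+1)$ Jacobian, already established above) does the heavy lifting, and the contradiction with minimality of $n$ is then immediate.
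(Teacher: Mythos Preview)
Your approach has a genuine gap. The minimal $n$ in this argument counts the equations $F_2,\dots,F_n$ in the implicit definition of $f_2,\dots,f_n$; the polynomials $P_0,P_1$ are auxiliary, built afterwards from $\Delta^{-1}(\partial F_i/\partial x_k)$ and the Cauchy--Riemann equations. So even if you could show that $P_0$ lies in the ideal generated by $P_1,\dots,P_{2n+1}$ near the graph, this says nothing about any of $F_2,\dots,F_n$ being redundant, and there is no mechanism by which ``dropping $F_0$'' removes a $\wp$-argument from the remaining $F_i$. Your proposed contradiction with the minimality of $n$ therefore does not go through. You also misremember what Bianconi does: his Claims~5 and~6 are precisely the transversality-plus-perturbation argument, not an ideal-membership reduction.

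The paper's proof is constructive and quite different from yours. One introduces the manifold $V=\{(x,y,z):y=\wp(ax),\,z=\wp'(ax)\}$ and the variety $W=\{P_2=\dots=P_{2n+1}=0,\ \text{Jacobian of max rank}\}$, and checks that along the image $X=\{\ytilde(x,y):(x,y)\in B\}\subseteq V\cap W$ the normal spaces of $V$ and $W$ meet trivially (this is exactly the computation with the matrices $M$, $M'$, $\Phi$, $P$ already set up above). Transversality means one can perturb $V$ and still intersect $W$ nontrivially. The perturbation is done \emph{explicitly using the addition formula for $\wp$}: for real $\eta,\xi$ one sends $f_2\mapsto f_2+\eta f_0+\xi f_1$ and updates $\wp(a_2f_2),\wp'(a_2f_2)$ via \eqref{equation- addition formula for P} and \eqref{equation- addition formula for P'}, keeping the other coordinates fixed. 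The perturbed point still lies in $W$ (by transversality and the fact, coming from Claim~\ref{claim-undefclaim1}, that the projection of $W$ onto the $(y_0,y_1,y_2,y_3)$-coordinates contains the graph of $(f_2,f_3)$ in its interior), but the Cauchy--Riemann equations for the pair $(u+\eta x+\xi y,\,v)$ read $u_x+\eta=v_y$ and $u_y+\xi=-v_x$. Taking $(\eta,\xi)=(\eta,0)$ with $\eta\ne 0$ breaks the first and preserves the second; taking $(\eta,\xi)=(0,\xi)$ with $\xi\ne 0$ does the opposite. This produces, for each $k\in\{0,1\}$, a point $z$ with $P_k(z)\ne 0$, $P_{1-k}(z)=0$, and $P_i(z)=0$ for $i=2,\dots,2n+1$. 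The addition formula is the essential ingredient you are missing; without it there is no obvious way to move inside $V$ while controlling which Cauchy--Riemann relation fails.
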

\begin{proof}
	This adapts the proofs of Claims 5 and 6 in the proof of Theorem 4 in \cite{bianconiundef}. Let $V$ be the subset of $\R^{3n+3}$ defined by 
	
	$$V=\{(x,y,z)\in\R^{n+1}\times\R^{n+1}\times\R^{n+1}:y=\wp(ax),z=\wp'(ax) \}$$
	\noindent
	where $\wp(ax)=(\wp(a_0x_0),\dots,\wp(a_nx_n))$ and $\wp'(ax)=(\wp'(a_0x_0),\dots,\wp'(a_nx_n))$. Also let $W$ be the subset of $\R^{3n+3}$ defined by 
	
	\begin{align*}W=\{&z\in\R^{3n+3}:P_2(z)=0,\dots,P_{2n+1}(z)=0 \text{ and }(\partial P_i/\partial y_j)(z)\ne 0\\& \text{ for }i=2,\dots,2n+1,j=2,\dots,3n+2 \text{ has maximal rank }\}.\end{align*}
	
	Let $X$ be the subset of $\R^{3n+3}$ defined by $\{ \ytilde(x,y)|(x,y)\in B \}$. Then it is clear that $X\subseteq V\cap W$.

	The subset $V$ may also be written as $$V=\{ (x,y,z)\in\R^{n+1}\times \R^{n+1}\times \R^{n+1}: \hat{F}_0(x,y,z)=\dots=\hat{F}_{2n+1}(x,y,z)=0 \},$$ where for $i=0,\dots,n$
	
	\begin{align*}
	&\Fhat_i(x,y,z)=y_i-\wp(a_ix_i)
	\\
	&\Fhat_{i+n+1}(x,y,z)=z_i-\wp'(a_ix_i).
	\end{align*}
	
	We denote the Jacobian matrix for this system by $\Phi$ and this is a $(2n+2)\times (3n+3)$ matrix given by
	
	$$\Phi=\begin{pmatrix}
	-a_0\wp'(a_0x_0)&\dots&0&1&\dots&0&0&\dots&0\\
	\vdots&\ddots&\vdots&\vdots&\ddots&\vdots&\vdots&\ddots&\vdots
	\\
	0&\dots&-a_n\wp'(a_nx_n)&0&\dots&1&0&\dots&0
	\\
	-a_0\wp''(a_0x_0)&\dots&0&0&\dots&0&1&\dots&0\\
	\vdots&\ddots&\vdots&\vdots&\ddots&\vdots&\vdots&\ddots&\vdots
	\\
	0&\dots&-a_n\wp''(a_nx_n)&0&\dots&0&0&\dots&1
	\end{pmatrix}.$$
	
	The normal space to $V$ at a point is generated by the rows of $\Phi$ evaluated at this point. Recall the matrix $M$,

	$$M=\begin{pmatrix}
	& 0&0& &0&0&\\I_{n-1}&\vdots&\vdots&M_1&\vdots&\vdots&M_2\\& 0&0&&0&0&
	\end{pmatrix}^T$$
	\noindent
	where 
	
	$$M_1=\begin{pmatrix}
	a_2\wp'(a_2f_2(x,y))&\dots&0\\\vdots&\ddots&\vdots\\0&\dots&a_n\wp'(a_nf_n(x,y))
	\end{pmatrix}$$
	\noindent
	and 
	
	$$M_2=\begin{pmatrix}
	a_2\wp''(a_2f_2(x,y))&\dots&0\\\vdots&\ddots&\vdots\\0&\dots&a_n\wp''(a_nf_n(x,y))
	\end{pmatrix}.$$

	Let $M'$ be the matrix 
	
	$$M'=\begin{pmatrix}
	0&0&\\\vdots&\vdots&M^T\\0&0&
	\end{pmatrix}.$$

	Then the matrix product $M'\cdot(\Phi(\ytilde))^T$ gives the $(n-1)\times (2n+2)$ zero matrix. Therefore the kernel of the linear transformation from $\R^{3n+3}$ to $\R^{2n+2}$ given by the matrix $M'$ is generated by the rows of the matrix $\Phi(\ytilde)$. Let $P$ be the matrix 
	
	$$P=\begin{pmatrix}
	\partial P_2/\partial y_0&\dots&\partial P_n/\partial y_0\\\vdots&\ddots&\vdots\\\partial P_2/\partial y_{3n+2}&\dots&\partial P_n/\partial y_{3n+2}
	\end{pmatrix}(\tilde{y}).$$

	Then we have that 
	
	$$M'\cdot P=\begin{pmatrix}
	\partial F_2/\partial x_2&\dots&\partial F_n/\partial x_2\\\vdots&\ddots&\vdots\\\partial F_2/\partial x_n&\dots&\partial F_n/\partial x_n
	\end{pmatrix}(\tilde{x}).$$

	
	The columns of the matrix on the right hand side of this equation are linearly independent over $\R$. Therefore the subspace of $\R^{3n+3}$ generated by the columns of $P$ has trivial intersection with the kernel of the linear transformation given by $M'.$ As the normal space to $W$ at a point is generated by the columns of $P$ evaluated at this point we have that in particular the normal spaces to $V$ and $W$ at each point in $X$ have trivial intersection and so the intersection of $V$ and $W$ is transversal. 
	
	\par 
	
	Therefore if the subspace $V$ is shifted locally then the intersection of $V$ and $W$ is still transversal. We shall now give such a shift explicitly. For real numbers $\eta$ and $\xi$ we let $V_{\eta,\xi}$ be the subset given by applying the following operations to $V$. In other words $V_{\eta,\xi}=\Psi(V)$ for $ \Psi:\R^{3n+3}\rightarrow\R^{3n+3}$ where $\Psi$ does the following, for $(y_0,\dots,y_{3n+2})\in\R^{3n+3}$  
	
	\begin{align*}
	& y_2\mapsto y_2+\eta y_0+\xi y_1
	\\& y_{2+n+1}\mapsto\frac{1}{4}\left(\frac{y_{2+2n+2}-\wp^\prime(a_2(\eta y_0+\xi y_1))}{y_{2+n+1}-\wp(a_2(\eta y_0+\xi y_1))}\right)^2-y_{2+n+1}-\wp(a_2(\eta y_0+\xi y_1)) 
	\end{align*}
	and
	\begin{align*}&y_{2+2n+2}\mapsto \frac{\begin{matrix}
		\Big( \wp(a_2(\eta y_0+\xi y_1))y_{2+2n+2}-\wp'(a_2(\eta y_0+\xi y_1))y_{2+n+1}\\-\wp(a_2(y_2+\eta y_0+\xi y_1))(y_{2+2n+2}-\wp'(a_2(\eta y_0+\xi y_1))\Big)
		\end{matrix}}{y_{2+n+1}-\wp(a_2(\eta y_0+\xi y_1))}
	\end{align*}
	\noindent
	and the rest of the variables are fixed. The projection of $W$ onto the variables $y_0,y_1,y_2,y_3$ contains the set 
	
	$$\{(f_0,f_1,f_2(f_0,f_1),f_3(f_0,f_1))|f_0,f_1\in B \}$$
	\noindent
	in its interior. If it did not then as $\dim \pi W=4$ we have $\dim \partial W\le3$ and so there is an algebraic relation between $f_0,f_1,f_2$ and $f_3$ contradicting Claim \ref{claim-undefclaim1}. So for each real $\eta$ and $\xi$ there is a positive real number $\delta$ such that for all real $f_0$ and $f_1$ with $f_0^2+f_1^2<\delta^2$ the intersection of $X$ with $V_{\eta,\xi}$ is non-empty. The effect of $\Psi$ on the subset $X$ is the following.
	
	\begin{align*}
	&f_2\rightarrow f_2+\eta f_0+\xi f_1
	\\ & \wp(a_2f_2)\rightarrow \wp(a_2(f_2+\eta f_0+\xi f_1))
	\\& \wp'(a_2f_2)\rightarrow \wp'(a_2(f_2+\eta f_0+\xi f_1)).
	\end{align*}
	
	
	The real numbers $\eta$ and $\xi$ may be chosen so that at least one of the Cauchy-Riemann equations for $u$ and $v$ are not satisfied. Therefore there is a point $z\in\R^{3n+3}$ such that $P_k(z)\ne 0$ for some $k=0,1$ and $P_{1-k}(z)=P_j(z)=0$ for $j=2,\dots,2n+1$ and so the lemma is proved.
\end{proof}

By shrinking and shifting the disc $D$ if necessary we may assume that all the points

\begin{align*}
\ytilde(x,y)=(&x,y,f_2(x,y),\dots,f_n(x,y),\\&\wp(a_0x),\wp(a_1y),\wp(a_2f_2(x,y)),
\dots,\wp(a_nf_n(x,y)),\\&\wp'(a_0x),\wp'(a_1y),\wp'(a_2f_2(x,y)),\dots,\wp'(a_nf_n(x,y)))
\end{align*}
\noindent
such that the system $P_2(\ytilde)=\dots=P_{2n+1}(\ytilde)=0$ is satisfied are contained in a single irreducible component of the variety $\Vcal(\langle P_2,\dots,P_{2n+1}\rangle)$ denoted $\mathcal{W}$. Suppose that $\dim (\mathcal{W}\cap \Vcal(\langle P_0\rangle))=\dim \mathcal{W}$. Then $\Wcal\cap\Vcal(\langle P_0\rangle )=\mathcal{W}$ as $\mathcal{W}$ is irreducible. By the proof of Lemma \ref{lemma- Undef- perturbation lemma} there is a point $z\in\Wcal$ such that $P_2(z)=\dots=P_{2n+1}(z)=0$ and $P_0(z)\ne 0$. Therefore there is a point $z\in\Wcal$ such that $z\notin \Vcal(P_0)$, a contradiction. By once again shifting and shrinking the disc $D$ we may suppose that all of the points $\ytilde(x,y)$ satisfying the system $P_0(\ytilde)=P_2(\ytilde)=\dots=P_{2n+1}(\ytilde)=0$ are contained in an irreducible component of the variety $\V(\langle P_0,P_2,\dots,P_{2n+1}\rangle)$, denoted $\Wcal'$.

\par 
Suppose that $\dim ( \Wcal'\cap \Vcal(\langle P_1\rangle))=\dim \Wcal'$, then again as $\Wcal'$ is irreducible we have that $\Wcal'\cap\Vcal(\langle P_1\rangle )=\Wcal'$. Again by the proof of Lemma \ref{lemma- Undef- perturbation lemma} there is a point $z\in\Wcal$ such that only one of $P_0(z)$ and $P_1(z)$ equals zero and $P_2(z)=\dots=P_{2n+1}(z)=0$. Therefore there is a point $z\in \Wcal'$ and $z\notin \V(\langle P_1\rangle)$, a contradiction as required. We have shown that if we add each of the polynomials $P_0$ and $P_1$ to the system $P_2,\dots,P_{2n+1}$ and consider the variety corresponding to the ideal generated by each of these new systems in turn then the dimension of each of these varieties decreases. Hence the upper bound on the transcendence degree of our finitely generated extension of $\C$ decreases by two.
\par 
Therefore we have a lower bound $$\trdeg_\C \C[f_0,\dots,f_n,\wp(a_0f_0),\dots,\wp(a_nf_n)]\ge n+2$$ and an upper bound  $$\trdeg_\C \C[f_0,\dots,f_n,\wp(a_0f_0),\dots,\wp(a_nf_n)]\le n+1,$$ a contradiction as required.
\par 
If $\Omega$ is not a real lattice then one must consider the structure $(\Rbar,\Re(\wp)|_I,\Im(\wp)|_I,\\\Re(\wp')|_I,\Im(\wp')|_I)$, which is also model complete by Gabrielov's result, Corollary \ref{corollary-background-gabrielov- model completeness}. The presence of the real and imaginary parts of $\wp$ gives an extra $2n+2$ variables in the system of polynomial equations arising from Proposition \ref{proposition- gab imp definition }. This raises the corresponding upper bound by $2n+2$. Therefore the method in the real lattice case must be adapted in order to find the required contradictory upper and lower bounds on transcendence degree. By Proposition \ref{proposition- gab imp definition } we have a system of polynomials involving the real and imaginary parts of both $\wp$ and $\wp'$, which may be rearranged to give a polynomial system involving $\wp,\wp',\wptilde$ and $\wptilde'$ where $\wptilde(z)=\overline{\wp(\zbar)}=\wp_{\Omegabar}(z)$. If $\Omega$ is not isogenous to $\Omegabar$ then there are no integers $a,b,c,d$ with $ad-bc\ne0$ such that $\overline\tau=(a\tau+b)/(c\tau+d)$ and so we may apply Theorem \ref{theorem-ax-schanuel for P-analytic functions on disc- multiple P functions} with the Weierstrass functions $\wp$ and $\wptilde$ in order to obtain a higher lower bound on transcendence degree. In order to lower the corresponding upper bound on transcendence degree further we add polynomial equations corresponding to the differential equation for $\wptilde$ in each variable as well as corresponding versions of the polynomial equations added in the real lattice case. This gives the desired contradiction. \par  If $\Omega$ is isogenous to its complex conjugate then there is a non-zero complex number $\alpha$ such that $\alpha\Omega\subseteq\Omegabar$. Therefore from the definition of $\wp$ we may rewrite $\wptilde(z)$ as a rational function in $\wp(\alpha^{-1}z)$. The system of polynomials obtained using Proposition \ref{proposition- gab imp definition } may be rewritten as system of rational functions involving $\wp(z),\wp(\alpha^{-1}z),\wp'(z)$ and $\wp'(\alpha^{-1}z)$ from which a system of polynomials may be obtained. The lower bound on transcendence degree is raised by applying Theorem \ref{theorem-ax-schanuel for P-analytic functions on disc- multiple P functions} with $\wp$ to the functions $a_0f_0,\dots,a_nf_n,\alpha^{-1}a_0f_0,\dots,\alpha^{-1}a_nf_n$. The upper bound on transcendence degree is lowered further by adding polynomial equations corresponding to the differential equation for $\wp(\alpha^{-1}z)$ in each variable as well as once again adding corresponding versions of the polynomial equations added in the real lattice case. This completes the proof of Theorem \ref{theorem- undef for P theorem}.

\section{Further definability results}\label{section-further results}
The first result in this section is an immediate corollary of Theorem \ref{theorem- undef for P theorem} combined with Theorem 12.5 in \cite{newtonpetstar}.

\begin{corollary}
	Let $\Omega\subseteq\C$ be a complex lattice which does not have complex multiplication and $I$ be a bounded closed interval in $\R$ which does not intersect $\Omega$. Let $X$ be an analytic subset of an open set $U\subseteq\C^n$. Assume that $U$ and $X$ are definable in $(\Rbar,\wp|_I)$. Then there is a complex algebraic set $A\subseteq\C^n$ such that $X$ is an irreducible component of $A\cap U$.
\end{corollary}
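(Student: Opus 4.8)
The plan is to obtain this as an immediate consequence of Theorem 12.5 in \cite{newtonpetstar}. Although that theorem is stated for $\Rexp$, its proof uses only two features of the ambient structure $\tilde\R$: that $\tilde\R$ is o-minimal, and that every $\tilde\R$-definable holomorphic function on a (poly)disc is already definable in $\Rbar$. Granting these, it asserts that an $\tilde\R$-definable analytic subset $X$ of an $\tilde\R$-definable open set $U\subseteq\C^n$ is an irreducible component of $A\cap U$ for some complex algebraic set $A\subseteq\C^n$. So the task reduces to verifying these two features for $\tilde\R=(\Rbar,\wp|_I)$.

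For o-minimality, the key point is that $I$ is compact and $I\cap\Omega=\emptyset$, so $\wp$ is holomorphic on a neighbourhood of $I$ in $\C$; in particular the real and imaginary parts of $\wp|_I$ are restricted analytic functions (after the semialgebraic affine change of variable that carries $I$ onto the standard cube). Hence $(\Rbar,\wp|_I)=(\Rbar,\Re(\wp|_I),\Im(\wp|_I))$ is a reduct of $\Ran$, and o-minimality is inherited from the o-minimality of $\Ran$.

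The second feature is precisely the content of Theorem \ref{theorem- undef for P theorem}: if $f=u+iv$ is holomorphic on a definable open polydisc $D\subseteq\C^n$ and $u,v$ are definable in $(\Rbar,\wp|_I)$, then $u$ and $v$, and therefore $f$ itself, are definable in $\Rbar$. Note that Theorem \ref{theorem- undef for P theorem} is already phrased in the polydisc generality required here, so no additional reduction is needed.

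With both features established, Theorem 12.5 of \cite{newtonpetstar} applies and yields the algebraic set $A$. I do not expect a genuine obstacle here — this is why the statement is an immediate corollary — and the only point needing care is to match the exact formulation of Theorem 12.5: if it is stated locally (an algebraic set in a neighbourhood of each point of $X$), one upgrades to a single global $A$ with $X$ an irreducible component of $A\cap U$ using Noetherianity of the Zariski topology on $\C^n$ together with the fact that a definable analytic set has only finitely many irreducible analytic components.
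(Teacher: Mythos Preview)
Your proposal is correct and matches the paper's approach exactly: the paper states this as an immediate consequence of Theorem~\ref{theorem- undef for P theorem} combined with Theorem~12.5 in \cite{newtonpetstar}, without further elaboration. Your additional remarks on o-minimality via $\Ran$ and on the precise formulation of the Peterzil--Starchenko result are sound and simply spell out details the paper leaves implicit.
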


For real lattices the following theorem can be seen in \cite{mcculloch2020nondefinability}. Here the result in \cite{mcculloch2020nondefinability} is extended to all complex lattices and a different proof is given.
\begin{theorem}\label{theorem-nondef for P}
	Let $\Omega$ be a complex lattice and $I\subseteq\R$ a bounded closed interval such that $I\cap\Omega$ is empty. Let $D\subseteq\C$ be a disc. Then $\wp|_D$ is definable in $(\Rbar,\wp|_I)$ if and only if the lattice $\Omega$ has complex multiplication.
\end{theorem}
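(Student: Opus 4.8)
The plan is to prove the two directions separately. The ``if'' direction — that complex multiplication implies $\wp|_D$ is definable in $(\Rbar,\wp|_I)$ — should follow by extending Macintyre's observation, as discussed in the introduction and carried out in \cite{mcculloch2020nondefinability} for real lattices. If $\Omega$ has complex multiplication there is a non-integer $\alpha\in\C$ with $\alpha\Omega\subseteq\Omega$, so $\wp(\alpha z)$ is a rational function of $\wp(z)$ (with coefficients in $\C$, hence definable in $\Rbar$); this ``CM relation'' together with the algebraic addition formulas \eqref{equation- addition formula for P} and \eqref{equation- addition formula for P'} allows one to propagate the values of $\wp$ from the real interval $I$ (and its $\alpha$-translates) to a full complex neighbourhood, and then to any disc $D$ on which $\wp$ is analytic by analytic continuation along a definable chain of discs. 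I would carry this out by first handling the real-lattice CM case exactly as in \cite{mcculloch2020nondefinability}, and then reducing a general CM lattice to a real one: any complex lattice is homothetic to $\Omega_\tau$ with $\tau\in\Hbb$, and a CM point $\tau$ is a quadratic imaginary irrational, so (after possibly applying an element of $SL_2(\Z)$ and rescaling) one may arrange the lattice to be real, or else argue directly with the algebraic homothety relating $\wp_\Omega$ to $\wp_{\Omegabar}$ as in the isogeny cases of Section \ref{section-proof of theorem}.

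The ``only if'' direction is where Theorem \ref{theorem- undef for P theorem} does the work. Suppose $\Omega$ does \emph{not} have complex multiplication and, for contradiction, that $\wp|_D$ is definable in $(\Rbar,\wp|_I)$ for some disc $D$ on which $\wp$ is analytic. After shrinking $D$ we may take $D$ to be a definable open disc, and write $\wp|_D = u + iv$ with $u,v$ definable in $(\Rbar,\wp|_I)$ and with $D$ identified with an open subset of $\R^2$; the function $u+iv$ is holomorphic on $D$ by construction. Theorem \ref{theorem- undef for P theorem} (with $n=1$) then forces $u$ and $v$ to be definable in $\Rbar$, i.e. semialgebraic; being also holomorphic, $\wp|_D$ is an algebraic function. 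But $\wp$ is transcendental — it is not an algebraic function of $z$, for instance because it has infinitely many poles, or by a transcendence statement such as Theorem \ref{theorem-ax-schanuel for P-analytic functions on disc- multiple P functions} applied to $z_1 = z$ (which forces $\trdeg_\C\C[z,\wp(z)]\ge 2$, so $\wp(z)$ is transcendental over $\C(z)$). This contradiction completes the ``only if'' direction.

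The main obstacle is making the ``if'' direction genuinely uniform over all complex lattices with complex multiplication rather than just the real ones already treated in \cite{mcculloch2020nondefinability}. The subtlety is that for a general CM lattice the relation between $\wp_\Omega$ and its conjugate (or between $\wp(z)$ and $\wp(\alpha z)$) is only a rational, not polynomial, identity, and one must check that dividing by the relevant denominators does not interfere with the analytic continuation along a definable chain of discs from $I$ into $D$ — one needs the continuation path to avoid the finitely many exceptional points where these rational functions blow up, which can be arranged because those points form a definable discrete set. I would therefore spend the bulk of the argument setting up the definable analytic-continuation machinery carefully, and only sketch the verification that the CM and addition identities suffice to recover $\wp$ on each disc in the chain, since that part is routine manipulation of the classical identities in Section \ref{section- p function background}.
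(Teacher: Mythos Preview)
Your ``only if'' direction is exactly the paper's: apply Theorem \ref{theorem- undef for P theorem} and observe that $\wp$ is not algebraic. Nothing to change there.

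For the ``if'' direction you have the right ingredient at the outset --- the CM relation expressing $\wp(\alpha z)$ rationally in $\wp(z),\wp'(z)$ --- but you then take a detour that is both unnecessary and partly incorrect. The proposed reduction of a general CM lattice to a real one does not work: not every CM lattice is homothetic to a real lattice. For instance, with discriminant $-23$ the three reduced forms give CM points $\tau$ with $\Re\tau\in\{-\tfrac12,\pm\tfrac14\}$, and the latter two lie in $SL_2(\Z)$-orbits containing no point with $\Re\tau\in\tfrac12\Z$, so the corresponding lattices $\Omega_\tau$ satisfy $\overline{\Omega_\tau}\ne\Omega_\tau$ and no homothety fixes this. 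Your fallback (``argue directly with the algebraic homothety relating $\wp_\Omega$ to $\wp_{\Omegabar}$'') is also off target: the conjugate lattice plays no role here.

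The paper's argument avoids all of this with a one-line observation: the CM multiplier $\alpha$ is non-real, so the map $(x,y)\mapsto x+\alpha y$ sends $I\times I$ onto a parallelogram with non-empty interior in $\C$, and one may assume $D$ lies inside it. Then the addition formula \eqref{equation- addition formula for P} gives $\wp(x+\alpha y)$ rationally in $\wp(x),\wp'(x),\wp(\alpha y),\wp'(\alpha y)$, and the CM relation gives $\wp(\alpha y),\wp'(\alpha y)$ rationally in $\wp(y),\wp'(y)$. That is the entire argument --- no case distinction on whether $\Omega$ is real, and no chain of discs. Your ``definable analytic-continuation machinery'' is collapsed into a single application of the addition formula. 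You also omit the case where $D$ meets $\Omega$; the paper handles this separately by passing to $(z-\omega)^2\wp(z)$.
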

\begin{proof}
	Suppose that $D\cap\Omega$ is empty. Firstly we assume that $\Omega$ has complex multiplication and so there is a non-zero complex number $\alpha$ such that $\alpha\Omega\subseteq\Omega$. Define $f(z)=\wp(\alpha z)$. Then for all $\omega\in\Omega$ we have that 
	
	$$f(z+\omega)=\wp(\alpha z+\alpha\omega)=\wp(\alpha z)$$
	 and so $f$ is a meromorphic function that is periodic with respect to $\Omega$. By Theorem 3.2 in Chapter 6 of \cite{silverman} the function $f$ is a rational function in terms of $\wp$ and $\wp'$. Therefore $\wp|_{\alpha I}$ is definable in $(\Rbar,\wp|_I)$. Similarly we have that $\wp'|_{\alpha I}$ is definable in $(\Rbar,\wp|_I)$. We may assume that $D\subseteq I \times \alpha I$. Therefore for any $z\in D$ we have that $z=x+\alpha y$ for $x,y\in I$. By the addition formula for $\wp$ 
	 
	 $$\wp(x+\alpha y)=R(\wp(x),\wp'(x),\wp(y),\wp'(y))$$
	  for a rational function $R$. Therefore $\wp|_D$ is definable in $(\Rbar,\wp|_I)$. Conversely, suppose that $\Omega$ does not have complex multiplication and that there is a disc $D\subseteq\C$ such that $\wp|_D$ is definable in $(\Rbar,\wp|_I)$. As $\wp$ is holomorphic on $D$ we have that by Theorem \ref{theorem- undef for P theorem} the function $\wp|_D$ is definable in $\Rbar$, a contradiction.
	\par 
	Now let $D$ be a disc containing a single lattice point $\omega\in\Omega$ and consider the function $f(z)=(z-\omega)^2\wp(z)$. If $\Omega$ has complex multiplication then as $(z-\omega)^2 $ is definable in the structure $(\Rbar,\wp|_I)$ it is clear by a repetition of the above argument we have that $f|_D$ is definable in $(\Rbar,\wp|_I)$. Conversely suppose that $\Omega$ does not have complex multiplication and assume for a contradiction that $f|_D$ is definable in the structure $(\Rbar,\wp|_I)$. Then $f|_{D'}$ is definable in $(\Rbar,\wp|_I)$ for some disc $D'\subseteq D$ that does not contain $\omega$. Therefore $\wp|_{D'}$ is definable in $(\Rbar,\wp|_I)$, a contradiction.   
\end{proof}

In the proof of Theorem \ref{theorem- undef for P theorem} the existence of an Ax-Schanuel statement for the Weierstrass $\wp$-function is essential. This raises the question of whether we can recover corresponding nondefinability results for other transcendental functions that also satisfy an Ax-Schanuel theorem. In this context the modular $j$-function is a natural function to consider and the Ax-Schanuel result is due to Pila and Tsimerman in \cite{axforj}. The following theorem can be thought of as a $j$-function analogue of Theorem \ref{theorem-nondef for P}. The proof of this theorem adapts a similar method to the one seen in Section \ref{section-proof of theorem} and uses the first implicit definition in Section \ref{section-implicit definition}.
\begin{theorem}\label{nondef j theorem}
	Let $I\subseteq \R^{>0}$ be an open interval that is bounded away from zero and let $D\subseteq\Hbb$ be a non-empty disc. Then the restriction of $j$ to the disc $D$ is not definable in the structure $(\Rbar,j|_{iI})$.
\end{theorem}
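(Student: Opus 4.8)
The plan is to imitate the argument of Section~\ref{section-proof of theorem} with $j$ itself playing the role of the holomorphic function $f$, using the Desingularisation Theorem (Theorem~\ref{theorem-desingularisation}) in place of Proposition~\ref{proposition- gab imp definition }. Suppose for contradiction that $j|_D$ is definable in $(\Rbar,j|_{iI})$, and put $h=j|_{iI}$, which is real valued by Remark~\ref{remark- restriction of j is real valued}. Since \eqref{differential equation for j} contains no explicit $z$, rewriting it in terms of $h$ (so that $h'(t)=ij'(it)$ and $h''(t)=-j''(it)$) expresses $h'''$, hence inductively every $h^{(r)}$, as a rational function of $h,h',h''$ with rational coefficients; adjoining the inverses of $h$, $h'$ and $h-1728$ one thus obtains a set $\F$ of real analytic functions closed under differentiation with $(\Rbar,j|_{iI})$ and $(\Rbar,\F)$ having the same definable sets, and $(\Rbar,\F)$ is model complete by Corollary~\ref{corollary-background-gabrielov- model completeness}. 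As in Section~\ref{section-proof of theorem} I would translate and rescale so that all the data below lies in $[0,1]$, and then reduce to one real variable: choosing $y_0>0$ and a subinterval $J$ with $\{s+iy_0:s\in J\}\subseteq D$, the functions $u(s)=\Re j(s+iy_0)$ and $v(s)=\Im j(s+iy_0)$ are definable and real analytic on $J$.

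Applying Theorem~\ref{theorem-desingularisation} to $u$ and to $v$, combining the two systems and shrinking $J$, I would obtain an integer $n$, taken minimal, real analytic functions $f_1=u,f_2=v,f_3,\dots,f_n$ on $J$, and a non-singular polynomial system
\[
F_i\big(s,f_1(s),\dots,f_n(s)\big)=0\quad(i=1,\dots,n),\qquad\det\Big(\frac{\partial F_i}{\partial x_j}\Big)_{i,j}\ne0,
\]
in which each $F_i$ is a polynomial in $s$, the $f_k(s)$, and the values $h^{(r)}(s)$, $h^{(r)}(f_k(s))$ for $r=0,1,2$; the $h$-values must actually occur here, as otherwise $j$ would be algebraic along the segment. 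Complexifying $s$ to a variable $\sigma$, all these identities persist holomorphically near a point of $J$; writing $\tilde f_k$ for the complexifications, one has $\tilde f_1+i\tilde f_2=j(\sigma+iy_0)$, while $i\sigma$ and the $i\tilde f_k$ take values in $\Hbb$.

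The contradiction is to come from two transcendence-degree estimates for the field $L$ generated over $\C$ by $\sigma$, the $\tilde f_k$, and the values $h^{(r)}(\sigma)$, $h^{(r)}(\tilde f_k)$ for $r=0,1,2$, which has at most $4(n+1)$ generators. Since \eqref{differential equation for j} lets one reduce $h'''$, the operator $d/d\sigma$ is a derivation of $L$, so the implicit function theorem applied to the non-singular system shows $\tilde f_k',\tilde f_k''\in L$; hence $j$, $j'$ and $j''$ evaluated at each of $\sigma+iy_0$, $i\sigma$ and $i\tilde f_1,\dots,i\tilde f_n$ all lie in $L$. The $n$ polynomial relations $F_i=0$ have Jacobian of rank $n$ at the point in question --- the non-singular Jacobian supplied by Theorem~\ref{theorem-desingularisation} has columns that are linear combinations of those of the full Jacobian of the $F_i$ --- so the standard argument using Proposition~5.3 of Chapter~8 of \cite{langalg} yields $\trdeg_\C L\le4(n+1)-n=3n+4$. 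For the reverse inequality, apply Theorem~\ref{theorem-ax schanuel for j-analytic functions} to the $n+2$ functions $i\sigma$, $\sigma+iy_0$, $i\tilde f_1,\dots,i\tilde f_n$. They are $\Hbb$-valued; by minimality of $n$ together with the transcendence of $j$ none of the $\tilde f_k$ is constant, so their $j$-values are non-constant; and no two of the $n+2$ points are Hecke-related --- since the zero locus of $\Phi_M$ is a finite union of graphs $z'=\gamma z$ with $\gamma\in GL^+_2(\Q)$, an identity $\Phi_M(j(w),j(w'))\equiv0$ would force $w'=\gamma w$ for a fixed such $\gamma$, which is impossible for the pair $(i\sigma,\sigma+iy_0)$ because the only M\"obius map taking $i\sigma$ to $\sigma+iy_0$ is $z\mapsto-iz+iy_0$, not defined over $\Q$, and for any other pair would exhibit some $\tilde f_k$ as a rational function of $\sigma$ or of another $\tilde f_k$, against the minimality of $n$. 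Thus $\trdeg_\C L\ge3(n+2)+1=3n+7$, contradicting $\trdeg_\C L\le3n+4$.

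I expect the main obstacle to be the verification of the hypotheses of the $j$-Ax--Schanuel theorem --- the pairwise non-Hecke-relatedness of the chosen points. This is the $j$-analogue of the $\Q$-linear independence of the $f_i$ used in Section~\ref{section-proof of theorem}, but harder, because the $\Phi_M$ form an infinite family; choosing $n$ minimal is what makes it manageable, and the transcendence of $j$ is what rules out the degenerate algebraic relations among the $\tilde f_k$. A secondary, more routine difficulty is the reparametrisation needed so that Theorem~\ref{theorem-desingularisation}, which is stated for functions on an interval, can be applied to the segment in $D$ while $h$ is at the same time evaluated at the interval-valued arguments $s,f_1(s),\dots,f_n(s)$.
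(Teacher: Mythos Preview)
Your proposal is correct and follows the same route as the paper: reduce to the real and imaginary parts of $j$ along a horizontal segment in $D$, apply the Desingularisation Theorem to obtain a minimal non-singular system, bound the transcendence degree from above via the Jacobian-rank argument and from below via Theorem~\ref{theorem-ax schanuel for j-analytic functions} applied to $n+2$ functions (the $n+1$ coming from the system together with the extra point on the segment in $D$), and derive the contradiction $3n+7\le 3n+4$. The reparametrisation issue you anticipate is resolved in the paper by composing with an explicit algebraic bijection $B:\R\to I$, so that $j(iB(t))$, $ij'(iB(t))$, $j''(iB(t))$ and the needed inverses become total real-analytic functions to which Theorem~\ref{theorem-desingularisation} applies directly; and the pairwise non-Hecke-relatedness is verified, just as you expected, by showing that a relation $\Phi_M(j(iB(f_k)),j(iB(f_l)))=0$ would let one eliminate one of the $f_l$ from the system, contradicting the minimality of $n$.
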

\begin{proof}
	Assume for a contradiction that there is a disc $D\subseteq\Hbb$ such that the restriction $j|_D$ is definable in the structure $(\Rbar,j|_{iI})$. For notational convenience we can suppose that the disc $D$ contains the horizontal line segment $i+I$ and so the real and imaginary parts of the function $j|_{i+I}$ are definable in the structure $(\Rbar,j|_{iI})$. Rearranging the differential equation satisfied by $j$ given in \eqref{differential equation for j} gives that 
	
	\begin{equation}
	ij'''(it)=\frac{-3}{2}\frac{(j''(it))^2}{ij'(it)}+\left(\frac{j^2(it)-1968j(it)+2654208}{2j^2(it)(j(it)-1728)^2}\right)(ij'(it))^3
	\end{equation}
	and so $ij'''(it)$ may be written as a polynomial in $j(it),ij'(it),j''(it),(ij'(it))^{-1}$ and \\$(2j^2(it)(j(it)-1728)^2)^{-1}$. By shrinking the interval $I$ if necessary we may assume that the denominators do not vanish for any $t\in I$. Therefore by differentiating this equation with respect to $t$ we can see that all the higher derivatives of $j(it)$ may also be given as polynomials in these functions.
	Consider the auxiliary structure given by expanding $\Rbar$ by the functions $j_B(t)=j(iB(t)),j_B'(t)=ij'(iB(t)),j_B''(t)=j''(iB(t)),j_1(t)=(ij'(B(t)))^{-1}$ and $j_2(t)=(2j(iB(t))^2(j(iB(t))-1728)^2)^{-1}$ as well as $B$ and $B_1$. Here $B:\R\rightarrow I$ is an algebraic function and $B_1$ is a rational function arising from the derivative of $B$ such that all higher derivatives of $B$ are polynomials in $B$ and $B_1$. The structures $(\Rbar,j|_{iI})$ and $(\Rbar,j_B,j'_B,j''_B,j_1,j_2,B,B_1)$ are equivalent in the sense of having the same definable sets. They also have the same universally and existentially definable sets. Therefore the real and imaginary parts of the function $j|_{i+I}$ are definable in the structure $(\Rbar,j_B,j'_B,j''_B,j_1,j_2,B,B_1)$. Therefore it suffices to prove Theorem \ref{nondef j theorem} in this auxiliary structure. It is clear from construction that the set $\{ j_B,j'_B,j''_B,j_1,j_2,B,B_1 \}$ is closed under differentiation and the ring of terms of this auxiliary structure is closed under differentiation in the sense of Section \ref{section-implicit definition}. By the Gabrielov result, Corollary \ref{corollary-background-gabrielov- model completeness}, the auxiliary structure $(\Rbar,j_B,j'_B,j''_B,j_1,j_2,B,B_1)$ is model complete.
	\par 
	Let $f_1,f_2:I\rightarrow \R$ be defined by $f_1(t)=\Re(j(i+t))$ and $f_2(t)=\Im(j(i+t))$. By applying Theorem \ref{desingularisation theorem} to both $f_1$ and $f_2$, we have that for some integer $n\ge 1$ and a subinterval $I'\subseteq I$ there are polynomials $P^*_1,\dots,P^*_{n}:\R^{8n+8}\rightarrow \R$ in $\R[y_1,\dots,y_{8n+8}]$, certain functions $f_3,\dots,f_{n}:I'\rightarrow \R$ such that for all $t\in I'$,
	
	$$\begin{array}{ccc}
	F_1(t,f_1(t),\dots,f_{n}(t))=0\\
	\vdots\\
	F_n(t,f_1(t),\dots,f_{n}(t))=0\end{array}$$
	\noindent
	and
	
	$$\det \left(\frac{\partial F_i}{\partial x_j}\right)_{\substack{i=1,\dots,n\\j=2,\dots,n+1}}(t,f_1(t),\dots,f_{n}(t))\ne 0,$$
	\noindent
	where for $i=1,\dots,n$ we have that 
	
	\begin{align*}
	F_i(t,f_1(t),\dots,f_n(t))=P^*_i(&t,f_1(t),\dots,f_{n}(t),
	\\& j(iB(t)),j(iB(f_1(t))),\dots,j(iB(f_{n}(t))),
	\\& ij'(iB(t)),ij'(iB(f_1(t))),\dots,ij'(iB(f_{n}(t))),
	\\& j''(iB(t)),j''(iB(f_1(t))),\dots,
	j''(iB(f_{n}(t))),
	\\& j_1(t),j_1(f_1(t)),\dots,j_1(f_n(t)),
	\\&j_2(t),j_2(f_1(t)),\dots,j_2(f_n(t))
	\\& B(t),B(f_1(t)),\dots,B(f_{n}(t)),
	\\& B_1(t),B_1(f_1(t)),\dots,B_1(f_{n}(t))).
	\end{align*}
	
	By the definition of the functions $j_1$ and $j_2$ as well as $B$ and $B_1$ we may write $F_1,\dots,F_n$ as algebraic functions in $t,f_1(t),\dots,f_{n}(t),j(iB(t)),j(iB(f_1(t))),\dots,j(iB(f_{n}(t)))$ and \\$ij'(iB(t)),ij'(iB(f_1(t))),\dots,ij'(iB(f_{n}(t)))$ as well as $j''(iB(t)),j''(iB(f_1(t))),\dots,\\j''(iB(f_{n}(t)))$. In defining these algebraic functions square roots are introduced from the definition of $B$, which may affect the analyticity of these algebraic functions. The domain of these algebraic functions is a small open subset of $\R^{4n+4}$ containing the set 
	
	$$\Gamma_j=\{[f(t),j(iB(f(t))),ij'(iB(f(t))),j''(iB(f(t)))]:t\in I' \}$$
	 where $f(t)=(t,f_1(t),\dots,f_n(t))$ and the algebraic functions are taken to be analytic on this domain. Hence for $i=1,\dots,n$ we have that
	
	\begin{align*}
	F_i(x_1,\dots,x_{n+1})=P_i(&x_1,\dots,x_{n+1},j(iB(x_1)),\dots,j(iB(x_{n+1})),\\& ij'(iB(x_1)),\dots,ij'(iB(x_{n+1})),j''(iB(x_1)),\dots, j''(iB(x_{n+1})))
	\end{align*}
	\noindent
	for algebraic functions $P_1,\dots,P_n$ and in particular for all $t\in I'$,
	
	\begin{align*}F_i(t,f_1(t),\dots,f_{n}(t))=P_i[&t,f_1(t),\dots,f_{n}(t),
	\\& j(iB(t)),j(iB(f_1(t))),\dots,j(iB(f_{n}(t))),
	\\&  ij'(iB(t)), ij'(iB(f_1(t))),\dots,ij'(iB(f_{n}(t))),
	\\& j''(iB(t)),
	j''(iB(f_1(t))),\dots, j''(iB(f_{n}(t)))]=0.
	\end{align*}
	Now take $n$ to be minimal such that the subinterval $I'$, the functions $f_3,\dots,f_n$ and the system of algebraic functions $P_1,\dots,P_n$ exists as given above. Let \begin{align*}
	\ytilde=\ytilde(t)=(&t,f_1(t),\dots,f_{n}(t),
	\\& j(iB(t)),j(iB(f_1(t))),\dots,j(iB(f_{n}(t))),
	\\&  ij'(iB(t)), ij'(iB(f_1(t))),\dots,ij'(iB(f_{n}(t))),
	\\& j''(iB(t)),
	j''(iB(f_1(t))),\dots, j''(iB(f_{n}(t)))).	\end{align*} For all $t\in I'$ it can easily be shown that the matrix
	$$\left(
	\frac{\partial P_i}{\partial y_j}\right)_{\substack{i=1,\dots,n\\j=2,\dots,4n+4}}(\ytilde(t))$$
	has maximal rank $n$. The standard argument noted in the proof of Theorem \ref{theorem- undef for P theorem} can be readily adapted for a system of algebraic functions and so 
	
	\begin{align*}
	\trdeg_{\mathbb{C}}\mathbb{C}[&t,f_1,\dots,f_n,\\&j(iB(t)),j(iB(f_1)),\dots,j(iB(f_n)),\\& ij^\prime(iB(t)),ij^\prime(iB(f_1)),\dots,ij^\prime(iB(f_n)),\\& j^{\prime\prime}(iB(t)),j^{\prime\prime}(iB(f_1)),\dots,j^{\prime\prime}(iB(f_n))] \le 4n+4-n=3n+4.
	\end{align*}
	
	Suppose that there is some integer $M\ge 1$ such that $$\Phi_M[j(iB(f_{k}(t))),j(iB(f_l(t)))]=0$$ for all $t\in I'$, where $0\le k,l\le n$ and $k\ne l$ and $f_0(t)=t$. For convenience we assume that $k=n-1$ and $n=l$. Then $iB(f_n(t))$ may be written as a rational function in $iB(f_{n-1}(t))$. Rearranging the modular polynomial $\Phi_M$ gives that $j(iB(f_n(t)))$ may be written as an algebraic function in $j(iB(f_{n-1}(t)))$. Differentiating both sides of this equation and rearranging and repeating this process gives algebraic functions for $ij'(iB(f_n(t)))$ and $j''(iB(f_n(t)))$ in terms of $f_{n-1}(t),j(iB(f_{n-1}(t))),ij'(iB(f_{n-1}(t)))$ and $f_{n-1}(t),j(iB(f_{n-1}(t))),ij'(iB(f_{n-1}(t))),j''(iB(f_{n-1}(t)))$ respectively. Therefore the non-singular system of algebraic functions $P_1,\dots,P_n$ may be rearranged to give a system of algebraic functions in fewer variables. If this system is non-singular at the points $\ytilde(t)$ then there is a contradiction to the minimality of $n$. Therefore this system is assumed to be singular at these points. However this leads to a contradiction of the non-singularity of the original system and we may therefore conclude that no such integer $M\ge 1$ exists. From this it can be shown that there is no integer $M\ge 1$ such that $\Phi_M(j(iB(f_k(t))),j(iB(f_l(t))))=0$ for all $k,l=0,\dots,n$ with $k\ne l$. Applying Theorem \ref{theorem-ax schanuel for j-analytic functions} to $i+f_0,iB(f_0),\dots,iB(f_n)$ gives that
	
	\begin{align*}\trdeg_\C\C[&i+t,iB(t),iB(f_1),\dots,iB(f_n),\\&j(i+t),j(iB(t)),j(iB(f_1)),\dots, j(iB(f_n)), \\&j'(i+t),j'(iB(t)),j'(iB(f_1)),\dots,j'(iB(f_n)),\\&j''(i+t),j''(iB(t)), j''(iB(f_1)),\dots,j''(iB(f_n)) ]\ge 3n+7
	\end{align*}
	and so
	\begin{align*}\trdeg_\C\C[&i+t,iB(t),iB(f_1),\dots,iB(f_n),\\&j(i+t),j(iB(t)),j(iB(f_1)),\dots, j(iB(f_n)), \\&j'(iB(t)),j'(iB(f_1)),\dots,j'(iB(f_n)),\\&j''(iB(t)), j''(iB(f_1)),\dots,j''(iB(f_n)) ]\ge 3n+5.
	\end{align*}
	As $f_1,f_2$ are the real and imaginary parts of $j(i+t)$ and the function $B$ is algebraic and $i+t$ and $iB(t)$ are algebraically dependent we have that
	\begin{align*}\trdeg_\C\C[&i+t,iB(t),iB(f_1),\dots,iB(f_n),\\&j(i+t),j(iB(t)),j(iB(f_1)),\dots, j(iB(f_n)), \\&j'(iB(t)),j'(iB(f_1)),\dots,j'(iB(f_n)),\\&j''(iB(t)), j''(iB(f_1)),\dots,j''(iB(f_n)) ]\le 3n+4,
	\end{align*}
	a contradiction.
\end{proof}

\section{Final remarks}\label{section- final remarks}

It is reasonable to expect that further nondefinability results for transcendental functions such as the modular j-function can be obtained by adapting the methods given here. In particular an analogue of Theorem \ref{theorem- undef for P theorem} for the modular $j$-function is a natural statement. However there are some obstructions in directly applying the method of Section \ref{section-proof of theorem} to this case. Firstly the necessity for a system of algebraic functions requires a reworking of the final part of the proof of Theorem \ref{theorem- undef for P theorem} for such a system. Also the lack of addition formula for the modular $j$-function makes a direct application of the proof of Lemma \ref{lemma- Undef- perturbation lemma} impossible. 
\par 
However for the Weierstrass $\zeta$-function, a quasi-periodic meromorphic function related to $\wp$ by the formula $\zeta'=\wp$ some definability results can be readily obtained. By using classical formulae and an Ax-Schanuel statement involving $\wp$ and $\zeta$, which is also due to Brownawell and Kubota in \cite{bkpaper} one can characterise the definability of restrictions of $\zeta$ to a disc $D\subseteq\C$ in the structure $(\Rbar,\wp|_I,\zeta|_I)$, where $I\subseteq\R$ is a bounded closed interval such that $I\cap\Omega=\emptyset$, in terms of complex multiplication. This is an analogue of Theorem \ref{theorem-nondef for P} for the Weierstrass $\zeta$-function and the proof is simply another adaptation of the method seen in the proof of Theorem \ref{nondef j theorem}.

\bibliographystyle{abbrv}

\bibliography{Nondefinability_for_elliptic_and_modular_functions} 
\end{document}